\theoremstyle{plain}
\newtheorem{theorem}{Theorem}
\newtheorem{lemma}[theorem]{Lemma}
\DeclareMathOperator{\GL}{GL}
\DeclareMathOperator{\Hom}{Hom}
\DeclareMathOperator{\im}{im}
\DeclareMathOperator{\SL}{SL}
\DeclareMathOperator{\Stab}{Stab}
\newcommand{\del}{\partial}
\newcommand{\delbar}{\overline{\partial}}
\newcommand{\IN}{\mathbb{N}}
\newcommand{\IZ}{\mathbb{Z}}
\newcommand{\IZhat}{\widehat{\IZ}}
\newcommand{\IQ}{\mathbb{Q}}
\newcommand{\IQbar}{\overline{\mathbb{Q}}}
\newcommand{\IR}{\mathbb{R}}
\newcommand{\IC}{\mathbb{C}}
\newcommand{\Gm}{\mathbb{G}_m}
\newcommand{\IP}{\mathbb{P}}
\newcommand{\dprime}{{\prime\prime}}
\newcommand{\pr}{\mathrm{pr}}
\newcommand{\caO}{\mathcal{O}}
\newcommand{\id}{\mathrm{id}}
\newcommand{\hhat}{\widehat{h}}
\newcommand\footnoteref[1]{\protected@xdef\@thefnmark{\ref{#1}}\@footnotemark}
\begin{document}
	
\author{Lars K\"uhne}\thanks{The author was supported by an Ambizione Grant of the Swiss National Science Foundation during the early stages of this project. He also received funding from the European Union Horizon 2020 research and innovation programme under the Marie Sklodowska-Curie grant agreement No. 101027237.}
%\thanks{This work was partially supported by an Ambizione Grant of the Swiss National Science Foundation.}
\email{lk@math.ku.dk}
\address{Institut for Matematiske Fag \\
Universitetsparken 5 \\
2100 København Ø \\
Denmark}

\subjclass[2010]{11G50 (primary), and 14K15, 14G40 (secondary)} 

\title[The Relative Bogomolov Conjecture]{The Relative Bogomolov Conjecture for Fibered Products of Elliptic Curves}

\begin{abstract}

We deduce an analogue of the Bogomolov conjecture for non-degenerate subvarieties in fibered products of families of elliptic curves from the author's recent theorem on equidistribution in families of abelian varieties. This generalizes results of DeMarco and Mavraki and improves certain results of Manin-Mumford type proven by Masser and Zannier to results of Bogomolov type, yielding the first results of this type for subvarieties of relative dimension $>1$ in families of abelian varieties with trivial trace.

\end{abstract}

\maketitle

In a previous article \cite{Kuehnea}, the author has established an analogue of the equidistribution conjecture for degenerate subvarieties in families of abelian varieties and deduced uniform versions of the Manin-Mumford and the Bogomolov conjecture for algebraic curves embedded in their Jacobian. In this article, we discuss another application of the same equidistribution result \cite[Theorem 1]{Kuehnea}, which has been the original motivation for the author's work on equidistribution. It should also be remarked that since the preprint \cite{Kuehnea} appeared, more general equidistribution results have been obtained by Gauthier \cite{Gauthier2021} as well as Yuan and Zhang \cite{Yuan2021b}.

Pink has suggested a generalization \cite[Conjecture 6.2]{Pink2005a} of the Manin-Mumford conjecture for families of abelian varieties. The following conjecture is nothing but the Bogomolov-type analogue of this conjecture, which was also proposed as \cite[Conjecture 1.2]{Dimitrov2020a}. We also remark that it overlaps with a conjecture already proposed in Zhang's 1998 ICM talk \cite[Section 4]{Zhang1998a}. Throughout this article, the subfield $K \subset \IQbar \subset \IC$ is a number field and $S$ is an irreducible algebraic variety over $K$. The variety $S$ serves as the base of a family $\pi: A \rightarrow S$ of abelian varieties. Furthermore, we assume being given an immersion $\iota: A \hookrightarrow \IP^N_K$ into projective space and a Weil height $h_{\mathcal{O}(1)}$ associated with the ample line bundle $\caO(1)$ on $\IP^N_K$.\footnote{Not every family of abelian varieties $\pi: A \rightarrow S$ admits a projective immersion even if $S$ does (compare \cite[Chapter XII]{Raynaud1970}), but we can always find a proper closed subset $Z \subsetneq S$ such that $A \setminus \pi^{-1}(Z)$ is quasi-projective (e.g.\ by spreading out from the generic point of $S$), take an immersion $\iota : A \setminus \pi^{-1}(Z) \hookrightarrow \IP^N_K$, and obtain again a canonical height function $\widehat{h}_\iota: (A \setminus \pi^{-1}(Z))(\IQbar) \rightarrow \IR^{\geq 0}$. Theorem \ref{theorem:bogomolov} still makes sense in this setting as (RBC) is invariant under passing to Zariski-dense open subsets of $S$. In particular, the Manin-Mumford part of the theorem holds for general families $\pi: A \rightarrow S$ even without the existence of a projective immersion.} For each closed point $x \in A$, we set
\begin{equation*}
\label{equation::nerontateheight}
\hhat_{\iota}(x) = \lim_{k \rightarrow \infty} \left(\frac{ h_{\mathcal{O}(1)}(\iota \circ [n^k](x))}{n^{2k}} \right).
\end{equation*}
As $[n]$ preserves the proper fibers of $\pi$, this is just the ordinary Néron-Tate height of $x$ with respect to (the symmetric part of) the line bundle $\iota^\ast \caO(1)|_{A_{\pi(x)}}$ on the abelian variety $A_{\pi(x)}$.

\textbf{Relative Bogomolov Conjecture (RBC).} \textit{Let $X$ be an irreducible subvariety $X \subset A$ such that $\pi(X)=S$. Assume that $X$ is not a subvariety of codimension $\leq \dim(S)$ in any horizontal torsion coset $Y \subset A$. Then, there exists some $\varepsilon(X)>0$ such that the set $$\{ \text{closed point } x \in X \ | \ \hhat_\iota(x) < \varepsilon(X) \}$$ is not Zariski-dense.}

The notion of horizontal torsion coset, which morally is the analogue of an abelian subvariety translated by a torsion point, demands a formal definition: Let $S^\prime \rightarrow S$ be a generically finite map and $\tau: S^\prime \rightarrow A_{S^\prime}$ a torsion section of the base change $\pi_{S^\prime}: A_{S^\prime} \rightarrow S^\prime$. For each subvariety $X \subseteq A_{S^\prime}$, we define its translate $X + \tau$ to be the image of $X \times_{S^\prime} \tau(S^\prime)$ under the (fiberwise) addition $A_{S^\prime} \times_{S^\prime} A_{S^\prime} \rightarrow A_{S^\prime}$. An irreducible variety $X \subseteq A$ is called a horizontal torsion coset if there exists a generically finite map $S^\prime \rightarrow S$, an $S^\prime$-flat subgroup scheme $B \subseteq A_{S^\prime}$, and a torsion section $\tau: S^\prime \rightarrow A_{S^\prime}$ such that the translate $B + \tau \subseteq A_{S^\prime}$ projects onto $X$. % Writing $\overline{\eta}_S$ for a geometric point over its generic point $\eta_S$, we remark that there is a one-to-one correspondence between the torsion translates of abelian subvarieties in $A_{\overline{\eta}_S}$ and the horizontal torsion cosets of $\pi: A \rightarrow S$.

It should be noted that (RBC) is independent of the chosen immersion $\iota: A \hookrightarrow \IP^N_K$, which is not completely trivial as $\hhat_{\iota}$ appears in its statement. Let $\iota,\iota^\prime: A \hookrightarrow \IP^N_K$ be two projective immersions and write $\eta$ for the generic point of $S$. Then there exists a positive integer $k$ such that both $(\iota^\ast \caO(1)^{\otimes k} \otimes (\iota^\prime)^\ast \caO(1)^{\otimes -1})|_\eta$ and $(\iota^\ast \caO(1)^{\otimes -1} \otimes (\iota^\prime)^\ast \caO(1)^{\otimes k})|_{\eta}$ are ample. There exists thus an open dense subset $U \subseteq S$ such that both $(\iota^\ast \caO(1)^{\otimes k} \otimes (\iota^\prime)^\ast \caO(1)^{\otimes -1})|_s$ and $(\iota^\ast \caO(1)^{\otimes -1} \otimes (\iota^\prime)^\ast \caO(1)^{\otimes k})|_{s}$ are ample for all $s \in U$. Since $\hhat_{\iota}$ and $\hhat_{\iota^\prime}$ restrict to the usual Néron-Tate heights on fibers, it follows that
\begin{equation*}
k^{-1} \cdot \hhat_\iota(x) \leq \hhat_{\iota^\prime}(x) \leq k \cdot \hhat_{\iota}(x)
\end{equation*}
for all closed points $x \in \pi^{-1}(U)$. As it clearly suffices to prove (RBC) for the restriction $X|_U \subseteq A|_U$, this shows the independence of (RBC) from the chosen immersion $\iota$.

%Finally, let us remark that the statements (1) and (2) in (RBC) are \textit{not} mutually exclusive cases, in stark contrast to the classical setting. Indeed, let $\mathcal{A}_{2,N}$, $N \geq 3$, be the moduli space of principally polarized abelian surfaces with level $N$-structure, and $\pi_1: \mathcal{B}_{2,N} \rightarrow S_1 := \mathcal{A}_{2,N}$ the corresponding universal abelian variety. We further choose an irreducible subcurve $S_2 \subset \mathcal{A}_{2,N}$. We choose sections $\sigma_i: S_i \rightarrow \mathcal{B}_{2,N}$ such that the torsion points are not Zariski-dense in their images $\sigma_i(S_i) \subseteq \mathcal{B}_{2,N}$; such sections $\sigma_i$ can be obtained for example from the work of Masser and Zannier \cite{Masser2015}. Setting $A:= \mathcal{B}_{2,N} \times \mathcal{B}_{2,N}$, $S = S_1 \times S_2$, $\pi := \pi_1 \times \pi_2 : A \rightarrow S$, and $X = \sigma_1(S_1) \times \sigma_2(S_2)$, it is easy to see that both (1) and (2) in (RBC) are true: (1) is evidently satisfied by the choice of the sections $\sigma_i$ and for (2) it suffices to note that 
%\begin{equation*}
%	\dim(A) - \dim(X) = 4 = \dim(S_1 \times S_2).
%\end{equation*}

%An optimal form of (RBC) -- with mutually exclusive hypotheses -- is unknown to the author. The problem seems related to a rather subtle behavior of the Betti rank \cite[Theorem 1.4 (ii)]{Gao2018a}, and our example is indeed the same as given in \cite[Example 9.4]{Gao2018a}. The formulation in \cite[Conjecture 1.2]{Dimitrov2020a} avoids this issue by just claiming that the negation of (2) implies (1).

The main result of our article concerns (RBC), and is a generalization of \cite[Theorem 1.4]{DeMarco2020}. As usual, (RBC) implies Manin-Mumford type results in the same relative settings. It has been already mentioned that the Manin-Mumford analogue of (RBC) was proposed by Pink \cite[Conjecture 6.2]{Pink2005a}. For $\dim(X)=1$, results related to Pink's conjecture have been obtained by Masser and Zannier \cite{Masser2014}, but no result of relative Manin-Mumford type seems to have been known for subvarieties $X \subset A$ of relative dimension $>1$ up to now.\footnote{While this article was in revision, Gao and Habegger have announced a general proof of the relative Manin-Mumford conjecture.} It should be remarked that \cite{DeMarco2020} uses the relative Manin-Mumford conjecture \cite{Masser2012} in order to prove (RBC). While this article was in preparation, DeMarco and Mavraki \cite{DeMarco2022c} were able to remove this dependence and to generalize their previous work; an essential new ingredient is the separation of holomorphic and anti-holomorphic terms that is also used here (compare our Section \ref{subsection::separation} below with the first step in \cite[Subsection 8.2]{DeMarco2022c}) and which has been first introduced by André, Corvaja, and Zannier (see \cite[Subsection 5.2]{Andre2020}). Our proof, which includes the case considered in \cite{DeMarco2020}, also avoids a dependence on the relative Manin-Mumford conjecture so that we obtain it instead as a genuine corollary in all cases under consideration. For general curves $X \subset A$ defined over $\IQbar$, the relative Manin-Mumford conjecture has been proven recently by Masser and Zannier \cite[Theorem 1.7]{Masser2020}.

\begin{theorem}
	\label{theorem:bogomolov}
	(RBC) is true if $A$ is the fibered product $E_1 \times_S E_2 \times_S \cdots \times_S E_g$ of families of elliptic curves $E_i \rightarrow S$ ($1 \leq i \leq g$) over a base variety $S$.
\end{theorem}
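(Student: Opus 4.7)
The plan is to argue by contradiction in the Ullmo--Zhang style, applying the equidistribution theorem \cite[Theorem 1]{Kuehnea} to the putative dense sequence of small points and exploiting the separation of holomorphic and antiholomorphic parts in the resulting current identity. Assume, contrary to (RBC), that there is a Zariski-dense sequence of closed points $x_n \in X$ with $\hhat_\iota(x_n) \to 0$ while $X$ is not contained in a horizontal torsion coset of codimension $\leq \dim(S)$. Applied to $(x_n)$, \cite[Theorem 1]{Kuehnea} produces weak convergence of the associated Galois-orbit probability measures on $A(\IC)$ to a canonical limit measure $\mu_X$ supported on $X(\IC)$ that is fiberwise a scaling of the normalized Haar measure on each complex fiber $A_s(\IC)$.

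The next step is to extract geometric information by pairing this with an auxiliary construction built from $X$ (for instance $[N]X$, a translate of $X$ by a section, or an Ullmo--Zhang-type morphism out of $X \times_S X$) that also inherits a Zariski-dense sequence of small points. Equidistribution applied simultaneously to $X$ and to the auxiliary then forces an identity of canonical currents, equivalently of Green functions, between them. The fibered-product structure $A = E_1 \times_S \cdots \times_S E_g$ enters here: on each factor $E_i$, the Néron function is, up to harmonic correction, $\log|\theta_i|$ for a Jacobi-type theta function $\theta_i$, and the fiberwise Haar measures decompose as products over the elliptic factors.

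The decisive step is the separation of this identity into holomorphic and antiholomorphic parts. Writing $2\log|\theta_i| = \log\theta_i + \log\bar\theta_i$ and arguing as in \cite[Subsection 5.2]{Andre2020} and \cite[Subsection 8.2]{DeMarco2022c}, I would convert the analytic current identity into a meromorphic one valid on a Zariski-dense open of $S(\IC)$; this is the required rigidity step. Unpacking the resulting identity, one finds an algebraic relation among the elliptic-curve coordinates of $X$ that forces $X$ to lie in a horizontal torsion coset of codimension $\leq \dim(S)$, contradicting the hypothesis on $X$.

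The main obstacle is precisely this rigidity extraction: the equidistribution theorem alone only yields an equality of analytic measures, and passing from this to an algebraic statement requires the André--Corvaja--Zannier separation, which relies essentially on the one-dimensional fibers of each $E_i \to S$. This dependence is also the reason the theorem is restricted to fibered products of elliptic curves rather than arbitrary families of abelian varieties.
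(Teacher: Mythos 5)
Your frame---argue by contradiction, feed the small Zariski-dense sequence into \cite[Theorem 1]{Kuehnea}, and use the Andr\'e--Corvaja--Zannier separation of holomorphic and antiholomorphic parts---does overlap with the paper, but as written the proposal has genuine gaps. First, the limit measure is not ``fiberwise a scaling of Haar measure on $A_s(\IC)$'': after the reductions one has $\dim X = g-1$, strictly less than the relative dimension, and the limit is a positive constant times $\beta^{\wedge d}$ restricted to $X(\IC)$, where $\beta$ is the sum of the Betti $(1,1)$-forms of the factors. Second, your proposed auxiliaries do not extract usable information: applying the same $[N]$ to all factors merely rescales $\beta^{\wedge d}|_{X(\IC)}$ by a constant, and the Ullmo--Zhang/Faltings--Zhang construction on $X\times_S X$ is exactly what the paper explains cannot be made to work in the relative setting (one cannot subtract points in different fibers). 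The step that actually uses the hypothesis of a fibered product is the comparison of equidistribution for $X$ with that for its images under $[n_1]\times\cdots\times[n_g]$ with \emph{distinct} multipliers on distinct elliptic factors; this forces the $(d,d)$-forms $\alpha_j'|_{X(\IC)}$ (products of Betti forms omitting one factor) to be pairwise proportional and yields the differential equations \eqref{equation::differential_equation_1}. Your sketch never identifies this mechanism.

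More seriously, the separation trick alone produces nothing, because \eqref{equation::differential_equation_1} is invariant under monodromy; the paper must combine it with two further ingredients that are absent from your proposal. One is the Hodge-theoretic input: a computation of the generic Mumford--Tate group of the natural admissible variation of mixed Hodge structures on $X(\IC)$ together with Andr\'e's normality theorem \cite{Andre1992}, which shows the algebraic monodromy group is large; explicit unipotent monodromy elements are then applied to the separated equations \eqref{equation::differential_equation_2} on $\widetilde{X}\times\widetilde{X}$, and degree comparisons in the unipotent parameter rule out mixed isotrivial/non-isotrivial configurations, force all factors to be generically isogeneous, and finally produce a linear relation \eqref{equation::linear_equation} among the abelian logarithms $z_1,\dots,z_g$ on $\widetilde{X}$. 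The other is functional transcendence: that relation lives on the universal cover and is transcendental, so your claim that one ``finds an algebraic relation \dots\ that forces $X$ to lie in a horizontal torsion coset'' is precisely the unproved step; the paper closes it with Gao's mixed Ax--Schanuel theorem \cite{Gao2018}, which bounds the dimension of the bi-algebraic closure of $X$ and contradicts the hypothesis of (RBC). Without the monodromy input and the Ax--Schanuel step, the passage from an identity of currents to the torsion-coset conclusion remains a gap rather than a proof.
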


The reader may note that we include the case of isotrivial families in the theorem. Furthermore, Theorem \ref{theorem:bogomolov} immediately implies (RBC) in the slightly more general situation that the fiber $A|_{\eta}$ over the generic point $\eta$ of $S$ is isogeneous to a fibered product of families of elliptic curves. %Recent work of Dimitrov, Gao, and Habegger allows us to deduce a uniform version of the Manin-Mumford conjecture recently obtained by DeMacro, Krieger, and Ye \cite[Theorem 1.1]{DeMarco2020a}. In fact, it is easy to see that Theorem \ref{theorem:bogomolov} is all that is needed to to obtain the following corollaries as subcases of \cite[Theorem 1.3]{Dimitrov2020a}. We write $\mathrm{Jac}(C)$ for the Jacobian of an algebraic curve $C$.In the special case that $\mathrm{Jac}(C)$ is the self-product of an elliptic curve, the assertion of the above corollary was proven by David and Philippon \cite[Proposition 6.4]{David2007}. Apart from this, the author is only aware of a recent result of DeMacro, Krieger, and Ye \cite[Theorem 1.1]{DeMarco2020a}.
The proof of Theorem \ref{theorem:bogomolov} constitutes the content of Sections \ref{section::first} to \ref{section::last}. For further details on the structure of the proof, we refer the reader to Section \ref{section::overview}. 

It is a reasonable first guess that the equidistribution result \cite[Theorem 1]{Kuehnea} implies (RBC) in general just as the classical Bogomolov conjecture can be proven by means of equidistribution. Unfortunately, the analogy with the classical case leads one astray here. The Ullmo-Zhang approach \cite{Ullmo1998,Zhang1998} to the Bogomolov conjecture does not transfer well to the relative setting because the Faltings-Zhang map
\begin{equation*}
A^n \longrightarrow A^{n-1}, \ (x_1,x_2,\dots,x_n) \longmapsto (x_1-x_2,\dots,x_{n-1}-x_n),
\end{equation*}
has only an $S$-fibered analogue that is too weak for a reproduction of the arguments used in \cite[Section 4]{Zhang1998}. In short, we cannot subtract points in two different fibers as there is no group structure on the total space. One can still subtract points contained in the same fiber, and this gives rise to the uniform results obtained in \cite{Dimitrov2021, Dimitrov2021a, Kuehnea}. Here, we can prove our Theorem \ref{theorem:bogomolov} by making use of the additional product structure available on $E_1 \times_S \cdots \times_S E_g$, but our argument definitely breaks down for generically simple families $A \rightarrow S$. Besides the author's recent equidistribution result, essential tools for the proof of Theorem \ref{theorem:bogomolov} are André's theorem \cite{Andre1992} on the normality of the monodromy group in admissible variations of mixed Hodge structures and the Ax-Schanuel conjecture for mixed Shimura varieties proven by Gao \cite{Gao2018}.

Finally, let us mention that by the argument given in \cite{Dimitrov2020a}, one can easily see that (RBC) implies a uniform version of the Bogomolov conjecture for curves of arbitrary genus $g \geq 2$ whose Jacobian is a product of elliptic curves. In other words, one can partially recover the author's previous result \cite[Theorem 2]{Kuehnea}. Likewise, the result of DeMarco, Krieger, and Ye \cite{DeMarco2020a}, which answered a question of Bogomolov and Tschinkel \cite{Bogomolov2018}, can be already deduced from the case of (RBC) proven here. However, the results of \cite{Kuehnea} and their improvement by Yuan \cite{Yuan2021} present substantially more general cases, in which (RBC) remains widely open.

\textbf{Notation and conventions.} %\textit{General.}
%For two terms $a$and $b$, we write $a \ll b$ if there exists a positive real number $c$ such that $a \leq c \cdot b$. If $c$ depends on some data, say an algebraic variety $X$, we write $a \ll_X b$ etc. If there is no subscript, the implied constant $c$ is absolute. We use $\gg$ similarly.
%\textit{Number fields.} Throughout this article, we let $K \subset \IQbar$ denote a number field with integer ring $\caO_K$. In addition, $\Sigma_f(K)$ (resp.\ $\Sigma_\infty(K)$) is the set of non-archimedean (resp.\ archimedean) places, and we set $\Sigma(K) = \Sigma_f(K) \cup \Sigma_\infty(K)$. %For each $\nu \in \Sigma(K)$, we let $K_\nu$ denote the $\nu$-adic completion of $K$. 
%By $\IC_\nu$ is denoted the completion of an algebraic closure $\overline{K}_\nu$ of $K_\nu$, %by $\mathcal{O}_\nu \subset \IC_\nu$ its ring of integers, 
%by $\mathfrak{p}_\nu$ the maximal prime ideal of $\mathcal{O}_\nu$, 
%by $k_\nu$ the residue field $\mathcal{O}_\nu/\mathfrak{p}_\nu,$ 
%and by $p_\nu$ the residue characteristic of $\IC_\nu$. For all $\nu \in \Sigma_f(K)$, the absolute value $\vert \cdot \vert_\nu$ on $\IC_\nu$ is normalized such that $\vert p_\nu \vert_\nu = p_\nu^{-[K_\nu:\IQ_p]}$. We use the standard values of $\IR$ and $\IC$ for archimedean places. This normalization leads to an additional factor
%\begin{equation*}
%\delta_\nu =
%\begin{cases}
%2 & \text{if $\nu$ is complex archimedean,} \\
%1 & \text{otherwise},
%\end{cases}
%\end{equation*}
%in some identities.
\textit{Algebraic Geometry (General).} Denote by $k$ an arbitrary field. A \textit{$k$-variety} is a reduced separated scheme of finite type over $k$. By a \textit{subvariety} of a $k$-variety we mean a reduced closed subscheme. A subvariety is determined by its underlying topological space and we frequently identify both. %The tangent bundle of a $k$-variety $X$ is written $TX$ and its fiber over a point $x \in X$ is denoted by $T_x X$. 
Furthermore, $X^{\mathrm{sm}}$ denotes the smooth locus of $X$. 
%If $X$ is an irreducible $k$-variety, we write $\eta_X$ for its generic point. %The unity of a $k$-algebraic group $G$ is written $e_G$.

%For a non-negative integer $d$ and a $k$-variety $X$, a \textit{$d$-cycle} on $X$ is a finite formal sum $\sum_{i=1}^{r} n_i [Z_i]$ where each $n_i$ is an integer and each $Z_i$ is a $k$-irreducible subvariety of $X$ having dimension $d$. %For a (not necessarily irreducible) subvariety $Z \subseteq X$ of pure dimension $d$, we also write $[Z]$ for the cycle that is the sum over the irreducible components of $Z$.

%For a line bundle $L$ over a general scheme, we denote by $\mathcal{F}(L)$ its sheaf of sections.

\textit{Generic sequences.} Let $X$ be an algebraic $k$-variety. If $X$ is irreducible, we say that a sequence $(x_i) \in X^\IN$ of closed points is \textit{$X$-generic} if none of its subsequences is contained in a proper algebraic subvariety of $X$. Note that a sequence is $X$-generic if and only if it converges to the generic point of $X$ in the Zariski topology. If the irreducible variety $X$ can be inferred from context, we simply say \textit{generic} instead of $X$-generic. 

%\textit{Line bundles and intersection theory.} For line bundles $L_1, L_2, \dots, L_d$ on a proper algebraic variety $X$ of dimension $d$ over a field $k$, we use the intersection numbers
%\begin{equation*}
%L_1 \cdot L_2 \cdots L_d \in \IZ
%\end{equation*}
%defined by Kleiman \cite{Kleiman1966} and Snapper \cite{Snapper1960} (see \cite[Section VI.2]{Kollar1996} for a good introduction). These coincide with the numbers
%\begin{equation*}
%\deg(c_1(L_1) \cap c_1(L_2) \cdots \cap c_1(L_d) \cap [X]) \in \IZ
%\end{equation*}
%in the terminology of \cite{Fulton1998}. If $\{ M_1, M_2, \dots, M_r \} = \{ L_1, L_2, \dots, L_d \}$ and each $M_i$ occurs $n_i$-times among $L_1,L_2,\dots,L_d$, we set
%\begin{equation*}
%M_1^{n_1} \cdot M_2^{n_2} \cdots M_r^{n_r} := L_1 \cdot L_2 \cdots L_d;
%\end{equation*}
%a similar notation is used for the arithmetic intersection numbers defined in Subsection \ref{subsection::arithmeticintersectionnumbers}. Furthermore, we write $\deg_L(X)$ for $L^{d}$. %We define the \textit{volume}
%\begin{equation*}
%\vol (L) = \limsup_{N \rightarrow \infty} \frac{h^0(X,L^{\otimes N})}{N^{\dim(X)}/\dim(X)!}.
%\end{equation*}

%The group law of Picard groups of line bundles, as well as of their arithmetic analogue introduced in Subsection \ref{subsection::hermitian}, is written additively. % For an invertible rational section $\mathbf{s}: X \dashrightarrow L$ of a line bundle $L$, its divisor is denoted by $\Div(\mathbf{s})$. The support of a divisor $D$ (resp.\ a cycle $\mathfrak{Z}$) is written $\left\vert D\right\vert$ (resp.\ $\left\vert \mathfrak{Z} \right\vert$).

\textit{Continuity and smoothness.} 
We use $\mathscr{C}^0$ %(resp.\ $\mathscr{C}^\infty$) 
as an abbreviation for continuous%(resp.\ smooth)
. For any topological space $X$, $\mathscr{C}^0(X)$ denotes the real-valued continuous functions on $X$ and $\mathscr{C}^0_c(X)$ the real-valued continuous functions on $X$ having compact support. %We use analogous notation for smooth functions.

%\textit{Tangent spaces.}
%For each differentiable or real-analytic manifold $M$ we denote by $TM$ its tangent bundle. The fiber of $TM$ over $x \in M$ is denoted $T_x M$.

%Let $M$ be a complex manifold (e.g., $(X^{\mathrm{sm}})^{\mathrm{an}}_{\IC_\nu}$ for an algebraic variety $X$ over $K$ and some $\nu \in \Sigma_\infty(K)$). To $M$ is associated its real tangent bundle $T_{\IR} M$ and its holomorphic tangent bundle $T^{1,0}_{\IC} M$ (e.g., $(T X)^{\mathrm{an}}_{\IC_\nu}$ for a smooth complex algebraic variety $X$ and some $\nu \in \Sigma_\infty(K)$). The reader is referred to \cite[Section 0.2]{Griffiths1994} and \cite[Section 1.2]{Huybrechts2005} for details.

\textit{Analytification.} For a number field $K \subset \IC$ and a $K$-variety $X$, we write $X(\IC)$ for the complex analytic space associated with $X_{\IC}$.

\textit{Complex spaces.} Let $M$ be a reduced complex (analytic) space (e.g., the analytic space $X(\IC)$ associated with a $K$-variety $X$). Recall that this means that $M$ is locally biholomorphic to a closed analytic subvariety $V$ in a complex domain $U \subset \IC^n$. % A function $f$ on $M$ is \textit{smooth} if, for each such sufficiently small local chart, it is the restriction of a smooth function on $U$. We write $\mathscr{C}^\infty(M)$ for the smooth real-valued functions on $M$. %In the same way, we use local charts to define \textit{plurisubharmonic} functions on $S$ as restrictions. 
A \textit{$\mathscr{C}^\infty$-form} $\omega$ on $M$ is a differential form on the smooth locus $M^\mathrm{sm}$ of $M$ with the following extension property: $M$ can be covered by local charts $V \subset U \subset \IC^n$ as above such that for each chart the differential form $\omega|_{V^{\mathrm{sm}}}$ is the restriction of a $\mathscr{C}^\infty$-differential form on $U$. There are also well-defined linear operators $d$, $\del$, $\delbar$ on the $\mathscr{C}^\infty$-differential forms on $M$. For each local chart $V \subset U \subset \IC^n$, these are simply the restrictions of the operators of the same name on $\IC^n$. %Having defined $\mathscr{C}^\infty$-differential forms on $M$, we can define \textit{currents} by duality as in \cite[Définition 1.1]{Demailly1985}.

\textit{Moduli spaces of elliptic curves}.  We write $Y(\mathcal{N})$ for the moduli stack over $\IQ$ parameterizing elliptic curves with level $\mathcal{N}$ structure (\cite[Section 13.1]{Olsson2016}). This is a smooth quasi-projective variety if $\mathcal{N}\geq 3$ (\cite[Corollary 4.7.2]{Katz1985}). For each $\mathcal{N}\geq 1$, we write $\xi_\mathcal{N}: \mathcal{E}(\mathcal{N}) \rightarrow Y(\mathcal{N})$ for the universal family of elliptic curves with level $\mathcal{N}$ structure.

\textit{Siegel upper half-space}. We write $\mathcal{H}_g$ for the Siegel upper half-space of degree $g$, considered as a complex manifold.

\section{Setting-up the proof of Theorem \ref{theorem:bogomolov}}
\label{section:bogomolov}
\label{section::first}

As in the statement of the theorem, let $S$ be a base variety, let $E_j \rightarrow S$ ($1\leq j \leq g$) be families of elliptic curves, and set $$\pi: A = E_1 \times_S \cdots \times_S E_g \rightarrow S.$$ Furthermore, let $X \subseteq A$ be a subvariety of dimension $d$, for which we want to prove (RBC). We start by making some additional assumptions for the proof of the theorem in Section \ref{subsection::reductions} and introduce coordinates in Section \ref{subsection::coverings}. Following this, we give an overview of the main argument in Section \ref{section::overview}.

\section{Reductions}
\label{subsection::reductions}
(i) In our proof, we suppose that the conclusion of (RBC) is false and show that its main assumption cannot hold under this assumption. This means our goal is to show that there exists a horizontal torsion coset $Y \subseteq A$ such that $X$ is a subvariety of codimension $\leq \dim(S)$ in $Y$. In the sequel, we can hence work with an $X$-generic sequence $(x_i) \in X^\IN$ such that $\hhat_{\iota}(x_i) \rightarrow 0$. %As this is the negation of the consequence of (RBC), our proof consists in showing that the assumption of (RBC) is violated, to wit, that there exists a horizontal torsion coset $Y \subseteq A$ such that $X$ is a subvariety of codimension $\leq \dim(S)$ in $Y$.

\vspace{0.2cm}

(ii) %Second, %we can assume that $X$ is not contained in a proper horizontal torsion coset of $A$. In addition, 
We can assume that $g = d+1$. In fact, if 
\begin{equation*}
\dim(A) - \dim(S) = g < d + 1 = \dim(X) + 1, 
\end{equation*}
then $\mathrm{codim}_A(X) < \dim(S) + 1$ so that the assumption of (RBC) is not satisfied. If $g > d + 1$, we choose a projection 
\begin{equation*}
\varphi: A \rightarrow E_{j_1} \times E_{j_2} \times  \cdots \times E_{j_{d+1}}, \ 1 \leq j_1 < j_2 < \cdots < j_{d+1}\leq g.
\end{equation*}
It clearly suffices to prove (RBC) for $\varphi(X)$.

(iii\textsubscript{0}) We can also make the following assumption: For any fibered product 
\begin{equation*}
\pi^\prime: A^\prime = E_{1}^\prime \times_{S^\prime} E_{2}^\prime \times_{S^\prime} \cdots \times_{S^\prime} E_{g^\prime}^\prime \rightarrow S^\prime
\end{equation*}
of families of elliptic curves $E_j^\prime \rightarrow S^\prime$ ($1\leq j\leq g^\prime \leq g$) and any commutative diagram
\begin{equation}
\label{equation::new_family}
\begin{tikzcd}
A \ar[r, "\varphi", two heads] \ar[d, "\pi"]
& A^\prime \ar[d, "\pi^\prime"] \\
S \ar[r, two heads] & S^\prime
\end{tikzcd}
\end{equation}
with $\varphi$ a fiberwise homomorphism, we have 
\begin{equation}
\label{equation::dimension_decent}
\dim(\varphi(X)) \geq \dim(X) - (g - g^\prime)
\end{equation}
with equality if and only if $\dim(A)=\dim(A^\prime)$. 

In fact, the special case $\dim(A)=\dim(A^\prime)$ is trivial as then $\varphi$ is a fiberwise isogeny and $g=g^\prime$. Thus, we may suppose that there exist such $\pi^\prime: A^\prime \rightarrow S^\prime$ and $\varphi: A \rightarrow A^\prime$ with $\dim(A^\prime)<\dim(A)$ and
\begin{equation}
\label{equation::violation}
\dim(\varphi(X)) \leq \dim(X) - (g - g^\prime).
\end{equation}
By an induction on $\dim(A)$, we can also assume that (RBC) is already proven for the family $\pi^\prime: A^\prime \rightarrow S^\prime$. The sequence $\varphi(x_i)$ is $\varphi(X)$-generic and satisfies $\hhat_{\iota^\prime}(\varphi(x_i)) \rightarrow 0$ for any immersion $\iota^\prime: A^\prime \hookrightarrow \IP^{N^\prime}$. Therefore $\varphi(X)$ has to violate the assumption in (RBC), which means that there exists a horizontal torsion coset $Y^\prime \subseteq A^\prime$ containing $\varphi(X)$ and satisfying
\begin{equation*}
\dim(Y^\prime) - \dim(\varphi(X)) \leq \dim(S^\prime).
\end{equation*}
Using the inequality \eqref{equation::violation}, we infer further that
\begin{equation*}
\dim(Y^\prime) - \dim(X) + (g - g^\prime) \leq \dim(S^\prime).
\end{equation*}
The irreducible components of $\varphi^{-1}(Y^\prime)$ are horizontal torsion cosets. We can pick such an irreducible component $Y$ containing $X$ and notice that
\begin{equation*}
\dim(Y) \leq \dim(Y^\prime) + \dim(S) - \dim(S^\prime) + (g - g^\prime).
\end{equation*}
Combining the last two inequalities, we obtain
\begin{equation*}
\dim(Y) - \dim(X) \leq \dim(S).
\end{equation*}
This is a violation of the assumption in (RBC) for $X \subseteq A$, and thus there is nothing left to prove.

(iii) Before continuing with our reductions, let us point out some consequences of the previous assumption. For each $1 \leq k \leq g$, we write $$\mathrm{pr}_{\widehat{k}}: \textstyle\prod_{j=1}^g E_j \longrightarrow \textstyle\prod_{j=1, j \neq k}^g E_j$$ for the standard projection. The images $\pr_{\widehat{k}}(X)$, $1 \leq k \leq g$, have then all dimension $\dim(X)$. In fact, we have a commutative square
\begin{equation*}
\begin{tikzcd}
A = \prod_{j=1}^g E_j \ar[r, "\mathrm{pr}_{\widehat{k}}", two heads] \ar[d]
& \prod_{j=1, j \neq k}^g E_j \ar[d] \\
S \ar[r, equal] & S 
\end{tikzcd}
\end{equation*}
as in \eqref{equation::new_family} above so that \eqref{equation::dimension_decent} implies
\begin{equation*}
\dim(\pr_{\widehat{k}}(X)) > \dim(X) - 1.
\end{equation*}
and thus $\dim(\pr_{\widehat{k}}(X)) = \dim(X)$.

A further consequence of the previous reduction is that $X$ is non-degenerate. In fact, if $X$ were degenerate, then \cite[Theorem 1.1 (i)]{Gao2018a} would produce a new family $\pi^\prime: A^\prime \rightarrow S^\prime$ and a fiberwise homomorphism $\varphi: A \twoheadrightarrow A^\prime$ filling a diagram \eqref{equation::new_family} but violating the constraint \eqref{equation::dimension_decent} on the dimension of $\varphi(X)$. 

Even more, all the images $\mathrm{pr}_{\widehat{k}}(X)$, $1 \leq k \leq g$, are non-degenerate as well. Indeed, assume that this is not the case. Then, \cite[Theorem 1.1 (i)]{Gao2018a} supplies a commuting diagram
\begin{equation*}
\begin{tikzcd}
A = \prod_{j=1}^g E_j \ar[r, "\mathrm{pr}_{\widehat{k}}", two heads] \ar[d]
& \prod_{j=1, j \neq k}^g E_j \ar[d] \ar[r, "\varphi"] & \prod_{j=1}^{g^\prime} E_j^\prime \ar[d] \\
S \ar[r, equal] & S \ar[r] & S^\prime
\end{tikzcd}
\end{equation*}
with fiberwise homomorphisms along the upper row and such that
\begin{equation*}
\dim(\varphi(\pr_{\widehat{k}}(X))) < \dim(\pr_{\widehat{k}}(X)) - (g - 1 - g^\prime).
\end{equation*}
Using our assumption, we deduce conversely from \eqref{equation::dimension_decent} that
\begin{equation*}
\dim(\varphi(\pr_{\widehat{k}}(X))) > \dim(X) - (g - g^\prime).
\end{equation*}
A combination of these two inequalities yields
\begin{equation*}
\dim(\pr_{\widehat{k}}(X)) - (g - 1 - g^\prime) \geq \dim(X) - (g - g^\prime) + 2,
\end{equation*}
which is equivalent to the absurdity 
\begin{equation*}
\dim(X) = \dim(\pr_{\widehat{k}}(X)) \geq \dim(X) + 1.
\end{equation*}
This proves our claim about the non-degeneracy of $\pr_{\widehat{k}}(X)$, $1 \leq k \leq g$.

For the convenience of the reader, let us briefly summarize the assumptions that we can and do tacitly assume in the following:

\begin{enumerate}	
%	\item[(i)] $X$ is irreducible,
	\item[(i)]	there exists an $X$-generic sequence $(x_i) \in X^\IN$ such that $\hhat_{\iota}(x_i) \rightarrow 0$,
	\item[(ii)] $\dim(A) - \dim(S) = g = d+1 = \dim(X) + 1$,
	\item[(iii)] $X$ is non-degenerate, and every projection $\pr_{\widehat{k}}(X)$, $1\leq k \leq g$, is non-degenerate and has dimension $\dim(X)$.
\end{enumerate}

We continue with imposing two further restrictions on the family $\pi: A \rightarrow S$. 

(iv) First, we can assume without loss of generality that it is a subfamily of the $g$-fold self-product of the universal family $\xi_\mathcal{N}: \mathcal{E}(\mathcal{N}) \rightarrow Y(\mathcal{N})$, $\mathcal{N} \geq 3$. There exists a classifying map $\mathcal{c}: S \rightarrow Y(1)^g$ (of algebraic stacks) such that
\begin{equation*}
\begin{tikzcd}
A \ar[r, "\varphi"] \ar[d] & \mathcal{E}(1)^g \ar[d] \\
S \ar[r, "\mathcal{c}"] & Y(1)^g
\end{tikzcd}
\end{equation*}
is a cartesian square, and we consider its pullback
\begin{equation*}
\begin{tikzcd}
A^\prime = A \times_{S} S^\prime \ar[r, "\varphi^\prime"] \ar[d] & \mathcal{E}(\mathcal{N})^g \ar[d] \\
S^\prime = S \times_{Y(1)} Y(\mathcal{N}) \ar[r, "\mathcal{c}^\prime"] & Y(\mathcal{N})^g
\end{tikzcd}
\end{equation*}
along $Y(\mathcal{N}) \rightarrow Y(1)$. As $S^\prime \rightarrow S$ is finite, it clearly suffices to prove (RBC) for the subvariety $X^\prime = X \times_S S^\prime \subseteq A^\prime$. Consider the induced subfamily $\pi^\prime: \varphi^\prime(A) \rightarrow \mathcal{c}^\prime(S)$. Assuming that (RBC) holds for this subfamily, we obtain as above that there exists a horizontal torsion coset $Y^\dprime \subseteq \varphi^\prime(A^\prime)$ with the property that
\begin{equation*}
\dim(Y^\dprime) - \dim(\varphi^\prime(X^\prime)) \leq \dim(\mathcal{c}^\prime(S^\prime)).
\end{equation*}
The preimage $Y^\prime=(\varphi^\prime)^{-1}(Y^\dprime)$ is evidently a horizontal torsion coset of dimension $\dim(Y^\dprime) + \dim(S^\prime) - \dim(\mathcal{c}(S^\prime))$ containing $X$, and furthermore
\begin{equation*}
\dim(Y^\prime)-\dim(X^\prime) \leq \dim(Y^\dprime) + \dim(S^\prime) - \dim(\mathcal{c}^\prime(S^\prime)) - \dim(\varphi^\prime(X^\prime)) \leq \dim(S^\prime)
\end{equation*} 
this is again contradicting the assumption of (RBC) for $X^\prime \subset A^\prime$ and hence also for $X \subset A$. We can and do hence assume that $A= \mathcal{E}(\mathcal{N})^g|_S$ for some $S \subseteq Y(\mathcal{N})^g$ where $\mathcal{N} \geq 3$ is fixed once and for all in the sequel.

(v) Second, we can use our free choice of the immersion $\iota: A \hookrightarrow \IP^N_K$ in the statement of (RBC) to guarantee that
\begin{equation*}
\iota = (\sigma \circ (\iota_0 \times \cdots \times \iota_0))|_S
\end{equation*}
where $\iota_0: \mathcal{E}(\mathcal{N}) \hookrightarrow \IP^{N_0}_K$ is an arbitrary projective immersion such that $\iota_0^\ast \caO(1)$ is fiberwise symmetric and $\sigma: \IP^{N_0}_K \times \cdots \times \IP^{N_0}_K \hookrightarrow \IP^N_K$ is the Segre embedding.

\section{Coverings and coordinates}
\label{subsection::coverings}

We recall the following commutative diagram, whose horizontal rows are universal coverings: \tikzcdset{every label/.append style = {font = \normalsize}}
\begin{equation*}
\begin{tikzcd}
\arrow[loop left, "{\normalsize \Gamma = (\IZ^2 \rtimes \Gamma(\mathcal{N}))^g}"] (\mathbb{C} \times \mathcal{H}_1)^g \ar[rr, "\mathcal{u}_{\mathrm{mixed}}"] \ar[d, two heads, "\ \widetilde{\pi}"]
& & \mathcal{E}(\mathcal{N})^g(\IC) \ar[d, two heads, "\ \xi_{\mathcal{N}} \times \cdots \times \xi_{\mathcal{N}}"] 
\\
\arrow[loop left, "{\normalsize \Gamma(\mathcal{N})^g}"] \mathcal{H}_1^g  \ar[rr, "\mathcal{u}_{\mathrm{pure}}"] & & Y(\mathcal{N})^g(\IC).
\end{tikzcd}
\end{equation*}
The covering transformations of $\mathcal{u}_{\mathrm{pure}}$ (resp.\ $\mathcal{u}_{\mathrm{mixed}}$) are given by the group $\Gamma(\mathcal{N})^g$ (resp.\ $(\IZ^2 \rtimes \Gamma(\mathcal{N}))^g$) where
\begin{equation*}
\Gamma(\mathcal{N}) = \left\{ \begin{pmatrix} a & b \\ c & d \end{pmatrix} \in \SL_2(\IZ) \ \middle| \ 
\begin{pmatrix} a & b \\ c & d \end{pmatrix}
\equiv
\begin{pmatrix} 1 & 0 \\ 0 & 1 \end{pmatrix} (\text{mod $\mathcal{N}$})
\right\}
\end{equation*}
acts on $\IZ^2$ through its standard representation. This identification is such that the group element
\begin{equation*}
\left(
\begin{pmatrix} m_1 \\ n_1 \end{pmatrix},
\begin{pmatrix} a_1 & b_1 \\ c_1 & d_1 \end{pmatrix},
\dots, 
\begin{pmatrix} m_g \\ n_g \end{pmatrix}, 
\begin{pmatrix} a_g & b_g \\ c_g & d_g \end{pmatrix} \right) 
\in (\IZ^2 \rtimes \Gamma(\mathcal{N}))^g
\end{equation*}
sends $(z_1,\dots,z_g,\tau_1,\dots,\tau_g) \in (\mathbb{C} \times \mathcal{H}_1)^g$ to 
\begin{equation*}
\left(\frac{z_1+m_1+n_1\tau_1}{c_1\tau_1+d_1},\dots, \frac{z_g+m_g+n_g\tau_g}{c_g\tau_g+d_g}, \frac{a_1\tau_1+b_1}{c_1\tau_1+d_1}, \dots, \frac{a_g\tau_g+b_g}{c_g\tau_g+d_g} \right)
\end{equation*}
(compare e.g.\ \cite[Section 8.8]{Birkenhake2004}).

%These group actions on the right are such that $\mathcal{u}_{\mathrm{mixed}}$ (resp.\ $\mathcal{u}_{\mathrm{pure}}$) is the associated quotient map in the category of complex-analytic spaces. 

On the complex manifold $(\mathbb{C} \times \mathcal{H}_1)^g$, we have the global holomorphic standard coordinates $z_l, \tau_l$ ($1\leq l \leq g$) and their complex-conjugates $\overline{z}_l, \overline{\tau}_l$. In addition, we define $2g$ real-analytic functions
\begin{equation*}
x_l, y_l: (\mathbb{C} \times \mathcal{H}_1)^g \longrightarrow \IR, \ 1 \leq l \leq g,
\end{equation*}
by demanding $z_l = x_l + \tau_l y_l$. 

The preimage $\mathcal{u}_{\mathrm{pure}}^{-1}(S)$ (resp.\ $\mathcal{u}_{\mathrm{mixed}}^{-1}(X)$) decomposes into irreducible analytic components of dimension $\dim(S)$ (resp.\ $\dim(X)$), on which the group $\Gamma(\mathcal{N})^g$ (resp.\ $(\IZ^2 \rtimes \Gamma(\mathcal{N}))^g$) acts transitively. Due to the absence of elliptic fixed points on $Y(\mathcal{N})$ (see e.g.\ \cite[Exercise 2.3.7]{Diamond2005}), the map $\mathcal{u}_{\mathrm{pure}}$ (resp.\ $\mathcal{u}_{\mathrm{mixed}}$) is étale and these components coincide with the connected components of $\mathcal{u}_{\mathrm{pure}}^{-1}(S)$ (resp.\ $\mathcal{u}_{\mathrm{mixed}}^{-1}(X)$) in the euclidean topology. In the sequel, we keep fixed an irreducible component $\widetilde{X}$ of $X$. Its image $\widetilde{S}=\widetilde{\pi}(\widetilde{X})$ is then an analytic component of the preimage of $S$.

\section{Overview of the proof}
\label{section::overview}

For convenience of the reader, we briefly expose the main lines of the argument employed for the proof of Theorem \ref{theorem:bogomolov} in the following sections. In Section \ref{subsection::equidistribution}, we use the product structure and the equidistribution results of \cite{Kuehnea} to obtain differential-geometric conditions on $\widetilde{X}$. These conditions appear as real-analytic differential equations \eqref{equation::differential_equation_1}, which can be written down explicitly in local charts of $\widetilde{X}$ and the local coordinates introduced in Section \ref{subsection::coverings} above. 

A natural way to exploit these is to use monodromy, to wit, the fact that the stabilizer $\mathrm{Stab}(\widetilde{X}) \subseteq (\IZ^2 \rtimes \Gamma(\mathcal{N}))^g$ is rather large. This largeness follows by Hodge-theoretic techniques, which are exposed in Sections \ref{section::hodge1} to \ref{subsection::monodromy}. Besides rather explicit computations, we make use of a theorem of André \cite[Theorem 1]{Andre1992} on the normality of the algebraic monodromy group. Unfortunately, the equations \eqref{equation::differential_equation_1} are invariant under monodromy so that a direct application of monodromy fails; this failure is not really surprising since the Betti form, which gives rise to these equations, is invariant under monodromy. 

To the rescue comes an important idea of André, Corvaja, and Zannier \cite[Subsection 5.2]{Andre2020} that allows us to actually take advantage of the fact that the real-analytic equations \eqref{equation::differential_equation_1} contain both holomorphic and anti-holomorphic terms. In short, we replace $\widetilde{X}$ and \eqref{equation::differential_equation_1} with $\widetilde{X} \times \widetilde{X}$ and a new set of real-analytic differential equations \eqref{equation::differential_equation_2}. This is the content of Section \ref{subsection::separation}.

Following a computation of the transformation behavior of \eqref{equation::differential_equation_2} under monodromy (Section \ref{section::monodromy}) and a final technical preparation in Section \ref{section::technical}, we use explicit elements of the algebraic monodromy group to prove first that -- under the assumptions of (RBC) -- all factors are isogeneous (Sections \ref{section::reduction1} and \ref{subsection::diagonal}). With this at our disposal, we then deduce a linear equation \eqref{equation::linear_equation} on $\widetilde{X}$ in the coordinates $z_1,\dots,z_n$. Gao's mixed Ax-Schaunel theorem \cite{Gao2018} enables us then to conclude the proof of Theorem \ref{theorem:bogomolov} (Section \ref{section::last}).

\section{Equidistribution}
\label{subsection::equidistribution}

Let us start with defining the equilibrium measure. The $(1,1)$-form
\begin{equation}
\label{equation::invariant_form}
\frac{i}{\mathrm{Im}(\tau_j)} (dz_j - y_j d\tau_j ) \wedge (d\overline{z}_j - y_j d\overline{\tau}_j)
\end{equation}
on $(\mathbb{C} \times \mathcal{H}_1)^g$ is $(\IZ^2 \rtimes \Gamma(\mathcal{N}))^g$-invariant (see \cite[Lemma 2.6]{Dimitrov2021}) and hence descends to a $(1,1)$-form $\alpha_j$ on $A(\IC)$. We define the $(1,1)$-form
\begin{equation*}
\beta = \sum_{j=1}^g \alpha_i
\end{equation*}
and consider its $d$-fold exterior power
\begin{equation}
\label{equation:formula_mu}
\beta^{\wedge d} = d! \cdot \sum_{j=1}^{g} \alpha^\prime_j, \ \alpha^\prime_j = 
\alpha_1 \wedge \cdots \wedge \alpha_{j-1} \wedge \alpha_{j+1} \wedge \cdots \wedge \alpha_g.
\end{equation}
Up to multiplication with some (strictly) positive real number, this coincides with the smooth closed $(1,1)$-form on $A(\IC)$ provided by \cite[Lemma 2.6]{Dimitrov2021a} (compare also the proofs of \cite[Lemmas 7 and 11]{Kuehnea}). By Theorem \cite[Theorem 1]{Kuehnea} (and its proof), there exists some $\mathcal{k}_X>0$ such that 
\begin{equation}
\label{equation:equidistribution_bogo1}
\frac{1}{\# \mathbf{O}(x_i)}\sum_{x \in \mathbf{O}(x_i)} f(x)\longrightarrow \mathcal{k}_{X} \int_{X(\IC)} f \beta^{\wedge d}, 
\ i \rightarrow \infty,
\end{equation}
for every continuous function $f \in \mathscr{C}^0_c(X)(\IC)$. Note that each $\alpha_j^\prime|_{X(\IC)}$, $1\leq j \leq g$, is non-zero as $\pr_{\widehat{j}}(X)$ is non-degenerate by the reductions in Section \ref{subsection::reductions} above.

For given integers $n_1,\dots,n_{g} > 0$, we consider the homomorphism
\begin{equation*}
\varphi: E_1 \times_S \cdots \times_S E_{g} \longrightarrow E_1 \times_S \cdots \times_S E_{g}, \ (x_1,\dots,x_{g}) \longmapsto ([n_1]x_1,\dots,[n_{g}]x_{g}),
\end{equation*}
and set $Y=\varphi(X)$. We claim that $(\varphi^\ast \beta|_Y)^{\wedge d}$ is real-proportional to $(\beta|_X)^{\wedge d}$. As $\varphi$ is étale, there exists a non-empty open subset $U \subseteq X(\IC)$ such that the restriction $\varphi|_U: U \rightarrow \varphi(U)$ is a biholomorphism. Writing $y_i = \varphi(x_i)$, the sequence $(y_i)$ is $Y$-generic and satisfies $\hhat_\iota(y_i)\rightarrow 0$. By the above argument applied to $Y$ instead of $X$, we have hence
\begin{equation}
\label{equation:equidistribution_bogo2}
\frac{1}{\# \mathbf{O}(y_i)}\sum_{y \in \mathbf{O}(y_i)} g(y)\longrightarrow \mathcal{k}_Y \int_{Y(\IC)} g \beta^{\wedge d}, 
\ i \rightarrow \infty,
\end{equation}
for every $g\in \mathcal{C}^0_c(Y(\IC))$. For any continuous function $f\in \mathcal{C}^0_c(U)$, there exists a (unique) continuous function $g\in \mathcal{C}^0_c(\varphi(U))$ such that $f = g \circ \varphi$. As $\varphi(\mathbf{O}(x_i))=\mathbf{O}(y_i)$, we have 
\begin{equation*}
\frac{1}{\# \mathbf{O}(x_i)}\sum_{x \in \mathbf{O}(x_i)} f(x) = \frac{1}{\# \mathbf{O}(y_i)}\sum_{y \in \mathbf{O}(y_i)} g(y)
\end{equation*}
in this situation. Hence the limits in \eqref{equation:equidistribution_bogo1} and \eqref{equation:equidistribution_bogo2} are equal, which means that
\begin{equation*}
\mathcal{k}_X \int_{X(\IC)} f \beta^{\wedge d} =
\mathcal{k}_Y \int_{X(\IC)} f (\varphi^\ast \beta)^{\wedge d}
\end{equation*}
for any $f\in\mathcal{C}^0_c(V)$. Varying the test function $f$, we infer that $\mathcal{k}_X (\beta|_V)^{\wedge d} = \mathcal{k}_Y (\varphi^\ast \beta|_V)^{\wedge d}$. Since $\beta$ has real-analytic coefficients, this completes the proof of the claim.

As $\varphi^\ast \alpha_j = n_j^2 \alpha_j$, we have $$\varphi^\ast \alpha_j^\prime = (\textstyle\prod_{\substack{k \in \{1,\dots, g\}, k \neq j}} n_k^2) \cdot \alpha_j^\prime$$ for every $j \in \{1,\dots, g\}$. Thus, we have
\begin{equation*}
(\varphi^\ast \beta|_{X(\IC)})^{\wedge d} = d! \cdot \sum_{j=1}^g (\textstyle\prod_{\substack{k \in \{1,\dots, g\}, k \neq j}} n_k^2) \cdot \alpha_j^\prime|_{X(\IC)}
\end{equation*}
and hence the $(d,d)$-forms
\begin{equation*}
\sum_{j=1}^g \alpha_j^\prime|_{X(\IC)} \ \text{ and } \ \sum_{j=1}^g (\textstyle\prod_{\substack{k \in \{1,\dots, g\}, k \neq j}} n_k^2) \cdot \alpha_j^\prime|_{X(\IC)}
\end{equation*}
are proportional by a positive real constant, which depends on $n_1,\dots,n_g$. 

We claim that all $(d,d)$-forms $\alpha_j^\prime|_{X(\IC)}$, $j \in \{1,\dots, g\}$, are pairwise proportional up to positive real constants. In fact, choosing for example $n_{1} = n_2 = \cdots = n_{g-1} = 1$ and $n_g = 2$ yields that 
\begin{equation*}
\sum_{j=1}^{g} \alpha_j^\prime|_{X(\IC)} \ \text{ and } \ \sum_{j=1}^{g-1} \alpha_j^\prime|_{X(\IC)} + \frac{1}{4} \cdot \alpha_g^\prime|_{X(\IC)}
\end{equation*}
are proportional by a real positive constant. Rewriting this proportionality, we obtain that 
\begin{equation*}
\sum_{j=1}^{g-1} \alpha_j^\prime|_{X(\IC)} \ \text{ and } \ \alpha_g^\prime|_{X(\IC)}
\end{equation*}
are proportional by a positive real constant. (Note that the positivity of the volume forms $\alpha_j^\prime|_{X(\IC)}$, $1\leq j \leq g$, is used here to ensure that these proportionality factors are (strictly) positive.) This implies that also
\begin{equation*}
\sum_{j=1}^{g} \alpha_j^\prime|_{X(\IC)} \ \text{ and } \ \alpha_g^\prime|_{X(\IC)}
\end{equation*}
are proportional by a positive real constant. We obtain similarly that each $\alpha_j^\prime|_{X(\IC)}$,  $j \in \{1,\dots, g -1 \}$, is proportional to $\sum_{j=1}^{g} \alpha_j^\prime|_{X(\IC)}$, whence our claim. For later reference, let us choose reals $r_1,\dots,r_g>0$ such that
\begin{equation*}
%\label{equation::differentialequation}
r_1 \alpha_1^{\prime}|_{X(\IC)} = r_2 \alpha_2^{\prime}|_{X(\IC)} = \cdots = r_g \alpha_g^{\prime}|_{X(\IC)}.
\end{equation*}

This constitutes a differential-geometric restriction on the analytic subset $X(\IC)$ of $\mathcal{E}(\mathcal{N})(\IC)$. Pulling these back along $\mathcal{u}_{\mathrm{mixed}}$, we obtain similar restrictions on $\widetilde{X} \subset (\IC \rtimes \mathcal{H}_1)^g$. Let us spell these out in terms of a general local chart 
\begin{equation*}
\chi: B_1(0)^d = \{ (w_1,\dots,w_d) \in \IC^d \ | \ \max\{|w_1|,|w_2|,\dots,|w_d|\} < 1 \} \longrightarrow \widetilde{X}.
\end{equation*}
For each function $f$ on $\widetilde{X}$, we simply write $f$ (resp.\ $\partial f/ \partial w_m$, $\partial f/ \partial \overline{w}_m$) instead of $f \circ \chi$ (resp.\ $\partial (f \circ \chi)/\partial w_m$, $\partial (f \circ \chi)/\partial \overline{w}_m$). With this notation, the $(1,1)$-form $(\chi \circ \mathcal{u}_{\mathrm{mixed}})^\ast \alpha_j$, $j \in \{1,\dots, d\}$, on $B_1(0)^d$ equals
\begin{align*}
\frac{i}{\mathrm{Im}(\tau_j)} \left( \sum_{m=1}^d \left[ \frac{\del z_j}{\del w_m} - \frac{\mathrm{Im}(z_j)}{\mathrm{Im}(\tau_j)} \frac{\del \tau_j}{\del w_m} \right] dw_m\right) \wedge
\left( \sum_{m=1}^d \overline{\left[ \frac{\del z_j}{\del w_m} - \frac{\mathrm{Im}(z_j)}{\mathrm{Im}(\tau_j)} \frac{\del \tau_j}{\del w_m} \right]} d\overline{w}_m\right).
\end{align*}
Consequently, the $(d,d)$-form $(\chi \circ \mathcal{u}_{\mathrm{mixed}})^\ast \alpha_j^\prime$, $j \in \{1,\dots, d\}$, on $B_1(0)^d$ equals
\begin{equation}
\label{equation::alpha_j_prime}
\frac{i^d}{\prod_{k \in \{1,\dots,g\},k\neq j} \mathrm{Im}(\tau_k)} 
\left| \det (\mathbf{A}_j) \right|^2 w_1 \wedge \overline{w}_1 \wedge \cdots \wedge w_d \wedge \overline{w}_d
\end{equation}
with
\begin{equation*}
\mathbf A_j = \left( \frac{\del z_l}{\del w_m} - \frac{\mathrm{Im}(z_l)}{\mathrm{Im}(\tau_l)} \frac{\del \tau_l}{\del w_m} \right)_{\substack{l\in \{1,\dots, g\}, l \neq j \\ m \in \{1,\dots, d\}}}.
\end{equation*}
Therefore, the equality $r_j \alpha_j^{\prime}|_X = r_k \alpha_k^{\prime}|_X$, $j,k \in \{1,\dots, g\}$, implies
\begin{equation}
\label{equation::differential_equation_1}
r_j(\tau_j-\overline{\tau}_j)^3 \det(\mathbf B_j) \det(\mathbf C_j) 
= 
r_k(\tau_k-\overline{\tau}_k)^3 \det(\mathbf B_k) \det(\mathbf C_k) 
\end{equation}
on $B_1(0)^d$ where we set
\begin{equation*}
\mathbf B_j = \left( (\tau_l -\overline{\tau}_l) \cdot \frac{\del z_l}{\del w_m} - (z_l - \overline{z}_l) \cdot \frac{\del \tau_l}{\del w_m} \right)_{\substack{l\in \{1,\dots, g\}, l \neq j \\ m \in \{1,\dots, d\}}},
\end{equation*}
and
\begin{equation*}
\mathbf C_j = \left( (\tau_l -\overline{\tau}_l) \cdot \frac{\del \overline{z}_l}{\del \overline{w}_m} - (z_l - \overline{z}_l) \cdot \frac{\del \overline{\tau}_l}{\del \overline{w}_m} \right)_{\substack{l\in \{1,\dots, g\}, l \neq j \\ m \in \{1,\dots, d\}}}
\end{equation*}
for each $j \in \{1,\dots, g\}$.

\section{A variation of mixed Hodge structures on $X(\IC)$}
\label{section::hodge1}

In this section, we decorate the complex analytic space $X(\IC)$ with a variation of mixed $\IZ$-Hodge structures. We refer to \cite[Subsection 14.4.1]{Peters2008} for basic definitions concerning (admissible) variations of mixed $\IZ$-Hodge structures on complex analytic spaces. 

As a starting point, we endow the family $\mathcal{E}(\mathcal{N})(\IC)$ with a variation $H^\prime=(\mathbb{V}_\IZ,\mathcal{F}^\bullet,W_\bullet)$ of mixed $\IZ$-Hodge structures following Deligne \cite[Section 10]{Deligne1974}: The $\IZ$-modules
\begin{equation*}
\mathbb{V}_{\IZ,x} = \{ (l,n) \in \mathrm{Lie}(\mathcal{E}(\mathcal{N})_{\xi_\mathcal{N}(x)})(\IC) \times \IZ \ | \ \exp_{\xi_\mathcal{N}(x)}(l) = [n](x) \}
\end{equation*}
are naturally the stalks of a local system $\mathbb{V}_{\IZ}$ on $\mathcal{E}(\mathcal{N})(\IC)$ having $\IZ$-rank $3$. Furthermore, the Lie group exponential yields an exact sequence
\begin{equation*}
0 \longrightarrow \xi_{\mathcal{N}}^\ast (R^{1} \xi_{\mathcal{N},\ast} \underline{\IZ}_{\mathcal{E}(\mathcal{N})(\IC)})^\vee \longrightarrow \mathbb{V}_{\IZ} \longrightarrow \underline{\IZ}_{\mathcal{E}(\mathcal{N})(\IC)} \longrightarrow 0
\end{equation*}
of $\IZ$-local systems on $\mathcal{E}(\mathcal{N})(\IC)$. (We write $\underline{\IZ}_{\mathcal{E}(\mathcal{N})(\IC)}$ for the locally constant sheaf with stalk $\IZ$ on $\mathcal{E}(\mathcal{N})(\IC)$.) We use this to define the weight filtration as
\begin{equation*}
W_0 = \mathbb{V}_{\mathbb{Z}}, \ W_{-1} =  \xi_{\mathcal{N}}^\ast (R^{1} \xi_{\mathcal{N},\ast} \underline{\IZ}_{\mathcal{E}(\mathcal{N})(\IC)})^\vee, \ W_{-2} = 0.
\end{equation*}
Writing $\mathcal{V} = \mathbb{V}_\IZ \otimes \mathcal{O}_{\mathcal{E}(\mathcal{N})(\IC)}$ for the associated holomorphic vector bundle, we note that the stalk-wise projections $\mathbb{V}_{\IZ,x} \rightarrow \mathrm{Lie}(\mathcal{E}(\mathcal{N})_{\xi_{\mathcal{N}}(x)})(\IC)$ extend to a map 
\begin{equation*}
\phi: \mathcal{V} \rightarrow \xi_{\mathcal{N}}^\ast \mathrm{Lie}(\mathcal{E}(\mathcal{N}))(\IC)
\end{equation*}
of $\caO_{\mathcal{E}(\mathcal{N})(\IC)}$-sheaves where $\mathrm{Lie}(\mathcal{E}(\mathcal{N}))(\IC)$ is the sheaf on $Y(\mathcal{N})(\IC)$ having stalks $\mathrm{Lie}(\mathcal{E}(\mathcal{N})_x)$, $x \in Y(\mathcal{N})(\IC)$. We use this to define the Hodge filtration 
\begin{equation*}
\mathcal{F}^1 = 0, \ \mathcal{F}^0 = \mathrm{ker}(\phi), \ \mathcal{F}^{-1} = \mathcal{V},
\end{equation*}
so that we obtain a mixed Hodge structure of type $\{(0,0), (-1,0), (0,-1)\}$. 

The induced mixed $\IQ$-Hodge structure $H_\IQ=(\mathbb{V}_\IQ,\mathcal{F}^\bullet,W_{\bullet,\IQ})$ can also be described in terms of Shimura theory. For this, we note that $\mathcal{E}(\mathcal{N})(\IC)$ is one of the connected components of the mixed Shimura variety associated to the datum $(\mathbb{G}_{a,\IQ}^2 \rtimes \mathrm{GL}_{2,\IQ}, \IC \times \mathcal{H}_1)$ where $\GL_{2,\IQ}$ acts on $\mathbb{G}_{a,\IQ}^2$ via its standard representation (compare \cite[Chapter 10]{Pink1990}) and the neat open compact subgroup
\begin{equation*}
\mathcal{K}(\mathcal{N}) = \left\{ \left(\begin{pmatrix} m \\ n \end{pmatrix}, \begin{pmatrix} a & b \\ c & d \end{pmatrix} \right) \in \IZhat^2 \times \GL_2(\IZhat) \ \middle| \ 
\begin{pmatrix} a & b \\ c & d \end{pmatrix}
\equiv
\begin{pmatrix} 1 & 0 \\ 0 & 1 \end{pmatrix} (\text{mod $\mathcal{N}$})
\right\};
\end{equation*}
see \cite[Section 0.6]{Pink1990} for the definition of neatness in adelic groups and the proof that $\mathcal{K}(\mathcal{N})$, $\mathcal{N} \geq 3$, is neat. The mixed $\IQ$-Hodge structure $H_\IQ^\prime$ is then induced in the standard way (\cite[Propositions 1.7 and 1.10]{Pink1990}) from the representation 
\begin{equation}
\label{equation::product}
\mathbb{G}_{a,\IQ}^2 \rtimes \mathrm{GL}_{2,\IQ} = 
\begin{pmatrix} \GL_{2,\IQ} & \mathbb{G}_a^2 \\ 0 & 1 \end{pmatrix} \hookrightarrow \mathrm{GL}_{3,\IQ}.
\end{equation}

Writing $\mathrm{pr}_i: \mathcal{E}(\mathcal{N})^g \rightarrow \mathcal{E}(\mathcal{N})$ for the projection to the $i$-th factor, we endow $\mathcal{E}(\mathcal{N})^g(\IC)$ with the variation $H = \bigoplus_{j=1}^g\mathrm{pr}_j^\ast H^\prime$ of mixed $\IZ$-Hodge structures. The base change $H_\IQ$ can again be interpreted in Shimura-theoretic terms as arising from the $g$-fold product of the representation in \eqref{equation::product}. This allows to invoke a result of Wildeshaus \cite[Theorem II.2.2]{Wildeshaus1997} implying that $H$ is an admissible variation of mixed $\IZ$-Hodge structures. Finally, the restriction $H|_{X(\IC)}$ is the desired admissible variation of mixed $\IZ$-Hodge structures on $X(\IC)$. (By definition (see e.g.\ \cite[Definition 14.49]{Peters2008}), admissibility is trivially perserved by passing to analytic subvarieties.) In the following, we write $H|_X$ instead of $H|_{X(\IC)}$ to simplify our notation. We also write $H_x$ for the mixed Hodge structure associated with a point $x \in X(\IC)$.

\section{The generic Mumford-Tate group of $H|_X$}
\label{subsection::mumfordtategroup}

Write $\mathrm{MT}(H|_X)$ for the generic Mumford-Tate group of $H|_{X}$. It is clear that $\mathrm{MT}(H|_X) \subseteq (\mathbb{G}_{a,\IQ}^2 \rtimes \mathrm{GL}_{2,\IQ})^g$ -- both from the explicit description and the Shimura-theoretic formulation. There exists a countable union $\mathcal{Z} \subseteq X(\IC)$ of proper analytic subvarieties such that $\mathrm{MT}(H|_X)=\mathrm{MT}(H_x)$ for all $x \in X(\IC) \setminus \mathcal{Z}$ and that, for all points $x\in X(\IC)$, we have $\mathrm{MT}(H_x)\subseteq \mathrm{MT}(H|_X)$; we refer the reader to \cite[Section 4]{Andre1992} or \cite[Section 6]{Milne2013} for details. 

Analogous results are true for the generic Mumford-Tate group $\mathrm{MT}(H_{-1}|_X) \subseteq \GL_{2,\IQ}^g$ of the variation $H_{-1}|_X = W_{-1}/W_{-2}(H|_X)$ of pure $\IZ$-Hodge structures of weight $-1$. There is an evident surjective homomorphism $\mathrm{MT}(H|_X) \twoheadrightarrow \mathrm{MT}(H_{-1}|_X)$ between the generic Mumford-Tate groups; we hence determine $\mathrm{MT}(H_{-1}|_X)$ first. Its structure is mostly related to the presence or absence of generic isogenies between the factors of $A=\prod_{j=1}^g E_j$.

For this reason, we make a further assumption to simplify our notation: Write $\eta$ for the generic point of $S$. In the sequel, we may and do assume that there exist integers
\begin{equation*}
i_1 = 1 < i_2 < \cdots < i_{p+1} = g + 1
\end{equation*}
such that the elliptic curves
\begin{equation*}
E_{i_q, \eta}, E_{i_q+1, \eta}, \dots, E_{i_{q+1}-1, \eta}
\end{equation*}
are isogeneous for each $1\leq q \leq p$, and the elliptic curves
\begin{equation*}
E_{i_1,\eta}, E_{i_2,\eta}, \dots, E_{i_p,\eta}
\end{equation*}
are pairwise non-isogeneous. Set also $g_q = i_{q+1} - i_q$ for each $q \in \{1,\dots, p \}$. For the proof of Theorem \ref{theorem:bogomolov}, we can even assume that 
\begin{equation*}
E_{i_q, \eta} = E_{i_q+1, \eta} = \cdots = E_{i_{q+1}-1, \eta}
\end{equation*}
for each $1\leq q \leq p$. We also assume that there exists a $p^\prime \in \{0,\dots, p\}$ such that the families
\begin{equation*}
E_{i_1} \rightarrow S, \ E_{i_2} \rightarrow S, \ \dots, \ E_{i_{p^\prime}} \rightarrow S
\end{equation*}
are non-isotrivial, and the families
\begin{equation*}
E_{i_{p^\prime+1}} \rightarrow S, \ E_{i_{p^\prime+2}} \rightarrow S, \ \dots, \ E_{i_{p}} \rightarrow S
\end{equation*}
are constant. (In particular, all families are constant if $p^\prime=0$ and non-isotrivial if $p^\prime=p$.) We set $g^\prime = i_{p^\prime+1}-1$ and $A_{\mathrm{cst}} \times S = E_{g^\prime+1} \times_S \cdots \times_S E_{g}$. For any sufficiently generic point $s \in S(\IC)$ (i.e., $s$ is not contained in a countable union of proper analytic subvarieties of $S(\IC)$), the elliptic curves  
\begin{equation*}
E_{i_1,s}, E_{i_2,s}, \dots, E_{i_{p},s}
\end{equation*}
are pairwise non-isogeneous and none of the curves
\begin{equation*}
E_{i_1,s}, E_{i_2,s}, \dots, E_{i_{p^\prime},s}
\end{equation*}
has complex multiplication. Using \cite[Theorems B.53 and B.72]{Lewis1999}, we obtain for every point $x \in \pi^{-1}(s)$ that
\begin{equation}
\label{equation::genMT-1}
\mathrm{MT}(H_{-1,x}) = \Gm (\Delta_{g_1}(\mathrm{SL}_{2,\IQ}) \times \cdots \times \Delta_{g_{p^\prime}}(\mathrm{SL}_{2,\IQ}) \times \mathrm{Hg}(A_{\mathrm{cst}})) \subseteq \GL_{2,\IQ}^g
\end{equation}
where $\Delta_k: \SL_{2,\IQ} \rightarrow \SL_{2,\IQ}^k$, $k \in \IZ^{>0}$, denotes the diagonal map and $\mathrm{Hg}(A_{\mathrm{cst}})$ is the Hodge group of $A_{\mathrm{cst}}$, which we do not need to determine here. As $s$ is sufficiently generic, the subgroup in \eqref{equation::genMT-1} equals the generic Mumford-Tate group $\mathrm{MT}(H_{-1}|_X)$.

For each $x\in X$, we let $U_x$ denote the unipotent radical of $\mathrm{MT}(H_x)$. By \cite[Lemma 2.(c)]{Andre1992}, the sequence
\begin{equation*}
\begin{tikzcd} 
1 \ar[r] & U_x \ar[r] & \mathrm{MT}(H_{x}) \ar[r] & \mathrm{MT}(H_{-1,x}) \ar[r]  & 1
\end{tikzcd}
\end{equation*}
is exact. We claim that $\dim(U_x)=2g = 2 \sum_{q=1}^p g_q$ for a sufficiently general $x \in X(\IC)$. This leads immediately to
\begin{equation}
\label{equation::genMT-2}
\mathrm{MT}(H_x) = \Gm \prod_{q=1}^{p^\prime} \left( \mathbb{G}_{a,\IQ}^{2g_q} \rtimes \mathrm{SL}_{2,\IQ} \right) \times (\mathbb{G}_{a,\IQ}^{2(g-g^\prime)} \rtimes \mathrm{Hg}(A_{\mathrm{cst}})) \subseteq (\mathbb{G}_{a,\IQ}^2 \rtimes \mathrm{GL}_{2,\IQ})^g
\end{equation}
by comparing dimensions; here each copy of $\mathrm{SL_2}$ acts on the respective additive group $\mathbb{G}_{a,\IQ}^{2g_q} = \mathbb{G}_{a,\IQ}^2 \times \cdots \times \mathbb{G}_{a,\IQ}^2$ diagonally on each factor $\mathbb{G}_{a,\IQ}^2$. Similarly, $\mathrm{Hg}(A_{\mathrm{cst}})$ acts on $\mathbb{G}_{a,\IQ}^{2(g-g^\prime)}$ but we do not need to specify this action further. Again, it follows that the generic Mumford-Tate group $\mathrm{MT}(H|_X)$ is the group in \eqref{equation::genMT-2}.

The remaining claim follows by applying \cite[Proposition 1]{Andre1992} for the mixed Hodge structure $H_x$ at a sufficiently general point 
\begin{equation}
\label{equation::gen_point}
x = (x_1,\dots,x_g) \in X \subset E_1 \times_S \cdots \times_S E_g = A
\end{equation}
such that the elliptic curves $\mathcal{E}(\mathcal{N})_{\xi_{\mathcal{N}}(x_{i_q})}$, $1\leq q \leq p$, are pairwise non-isogeneous. We can freely assume that, as \eqref{equation::gen_point} varies, the generic rank of 
\begin{equation}
\label{equation::rank}
\mathrm{rank}_{R_q}(R_q x_{i_q} + \cdots + R_q x_{i_{q+1}-1}), \ R_q=\mathrm{End}(\mathcal{E}(\mathcal{N})_{\xi_{\mathcal{N}}(x_{i_q})}),
\end{equation}
equals $i_{q+1} - i_q = g_q$; for otherwise $X$ would be contained in a proper horizontal torsion coset of $A$, which contradicts the assumption made in the statement of (RBC). This allows us to choose \eqref{equation::gen_point} further such that, for all $q \in \{1,\dots, p \}$, the rank in \eqref{equation::rank} is $g_q$.

We invoke the said proposition from \cite{Andre1992} for the $1$-motive $[u: \IZ^g \rightarrow A_{\pi(x)}]$ where
\begin{equation*}
u: (n_1, n_2, \dots,n_g) \longmapsto (n_1 x_1 , n_2 x_2, \dots, n_g x_g).
\end{equation*}
The Zariski closure of $u(\IZ^g)$ is $A_{\pi(x)}$ by our assumptions \eqref{equation::rank}. Furthermore, we have 
\begin{equation*}
\mathrm{End}(A_{\pi(x)}) = 
R_1^{g_1 \times g_1} \times
\cdots \times
R_p^{g_p \times g_p}.
\end{equation*}
so that
\begin{multline}
\label{equation::unipotent_andre}
\mathrm{Hom}_{\mathrm{End}_\IQ(A_{\pi(x)})}
(\mathrm{End}_\IQ(A_{\pi(x)})\cdot u(\IZ^g),H_1(A_{\pi(x)},\IQ)) \\
=
\prod_{q=1}^p
\mathrm{Hom}_{R_{q,\IQ}}
(R_{q,\IQ} \cdot u_q(\IZ^{g_q}),H_1(A_{\pi(x)},\IQ))
\end{multline}
where $u_q: \IZ^{g_q} \rightarrow E_{i_q} \times E_{i_q+1} \times \cdots \times E_{i_{q+1}-1} = E_{i_q}^{g_q}$, $1 \leq q \leq p$, is defined by
\begin{equation*}
u_q(n_{i_q},n_{i_q+1},\dots,n_{i_{q+1}-1})=(n_{i_q} x_{i_q} , n_{i_q+1} x_{i_q+1}, \dots, n_{i_{q+1}-1} x_{i_{q+1}-1}).
\end{equation*}
If the elliptic curve $\mathcal{E}(\mathcal{N})_{\xi_{\mathcal{N}}(x_{i_q})}$, $q \in \{1,\dots, p\}$, has no complex multiplication, then \eqref{equation::rank} implies
\begin{align*}
\mathrm{Hom}_{R_{q,\IQ}}
(R_{q,\IQ} \cdot u_q(\IZ^{g_q}),H_1(A_{\pi(x)},\IQ)) 
&\approx \Hom_{\IQ^{g_q\times g_q}}(\IQ^{g_q\times g_q}\cdot u_q(\IZ^{g_q}), (\IQ^{2})^{g_p}) \\
&\approx \Hom_{\IQ}(\IQ \cdot u_q(\IZ^{g_q}),\IQ^2) \\
&\approx \Hom_{\IQ}(\IQ^{g_p},\IQ^2) \approx \IQ^{2g_p}.
\end{align*}
Similarly, if the elliptic curve $\mathcal{E}(\mathcal{N})_{\xi_{\mathcal{N}}(x_{i_q})}$, $q \in \{1,\dots, p\}$, has complex multiplication, then
\begin{align*}
\mathrm{Hom}_{R_{q,\IQ}}
(R_{q,\IQ} \cdot u_q(\IZ^{g_q}),H_1(A_{\pi(x)},\IQ)) 
&\approx \Hom_{R_{q,\IQ}^{g_q\times g_q}}(R_{q,\IQ}^{g_q\times g_q}\cdot u_q(\IZ^{g_q}),R_{q,\IQ}^{g_q}) \\
&\approx \Hom_{R_{q,\IQ}}(R_{q,\IQ} \cdot u_q(\IZ^{g_q}),R_{q,\IQ}) \\
&\approx \Hom_{R_{q,\IQ}}(R_{q,\IQ}^{g_p},R_{q,\IQ}) \approx R_{q,\IQ}^{g_q} \approx \IQ^{2g_q}
\end{align*}
by \eqref{equation::rank}. Thus the $\IQ$-dimension of \eqref{equation::unipotent_andre} is $2g$. By André's proposition, the dimension of $U_x$ equals the $\IQ$-dimension of the linear space \eqref{equation::unipotent_andre}, whence \eqref{equation::genMT-2}.
Finally, let us note that the generic derived Mumford-Tate group is
\begin{equation}
\label{equation::derivedMT}
\mathrm{MT}^{\mathrm{der}}(H|_X)=\prod_{q=1}^{p^\prime} \left( \mathbb{G}_{a,\IQ}^{2g_q} \rtimes \mathrm{SL}_{2,\IQ} \right) \times (\mathbb{G}_{a,\IQ}^{2(g-g^\prime)} \rtimes \mathrm{Hg}(A_{\mathrm{cst}}))^{\mathrm{der}}.
\end{equation}
In fact, it is a normal subgroup of $\mathrm{MT}(H|_X)$. Furthermore, its normal subgroup
\begin{equation*}
G = \mathrm{MT}^{\mathrm{der}}(H|_X) \cap \left(\prod_{q=1}^{p^\prime} \left( \mathbb{G}_{a,\IQ}^{2g_q} \rtimes \mathrm{SL}_{2,\IQ} \right) \times \{ e \}\right)
\end{equation*}
projects surjectively onto 
\begin{equation*}
\mathrm{MT}^{\mathrm{der}}(H_{-1,x}) = \Delta_{g_1}(\mathrm{SL}_{2,\IQ}) \times \cdots \times \Delta_{g_{p^\prime}}(\mathrm{SL}_{2,\IQ}) = \SL_{2,\IQ}^{p^\prime} \subseteq \SL_{2,\IQ}^{g^\prime} = (\GL_{2,\IQ}^{g^\prime})^{\mathrm{der}}.
\end{equation*}
The following lemma, which is also of use in the next section, yields \eqref{equation::derivedMT}.

\begin{lemma}
	\label{lemma::normality}
	Let $G \subseteq \prod_{q=1}^{p^\prime} ( \mathbb{G}_{a,\IQ}^{2g_q} \rtimes \mathrm{SL}_{2,\IQ})$ be a normal $\IQ$-algebraic subgroup projecting onto $\mathrm{SL}_{2,\IQ}^{p^\prime}$. Then, we have $G=\prod_{q=1}^{p^\prime} ( \mathbb{G}_{a,\IQ}^{2g_q} \rtimes \mathrm{SL}_{2,\IQ})$.
\end{lemma}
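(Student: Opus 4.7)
The plan is to reduce the problem to showing that $G$ contains the unipotent radical $U = \prod_{q=1}^{p^\prime} \mathbb{G}_{a,\IQ}^{2g_q}$ of the ambient group $H := \prod_{q=1}^{p^\prime}(\mathbb{G}_{a,\IQ}^{2g_q} \rtimes \mathrm{SL}_{2,\IQ})$, since $U$ is precisely the kernel of the projection $H \twoheadrightarrow \mathrm{SL}_{2,\IQ}^{p^\prime}$ and $G$ is assumed to surject onto this quotient. Combined with that surjectivity, this would give $G = H$. Thus the whole task reduces to proving $N := G \cap U = U$.

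To produce many elements of $N$ I would exploit normality in conjunction with the semidirect product structure. For any $v \in U$ and any $g = (u, A) \in G$, the element $(v,e) \cdot g \cdot (v,e)^{-1} \cdot g^{-1}$ lies in $G$ by normality, and a short computation in $H$ (using the product rule $(u_1, k_1)(u_2, k_2) = (u_1 + k_1 \cdot u_2, k_1 k_2)$) shows that this commutator equals $(v - A \cdot v, e)$, which lies in $U$ and hence in $N$. Because $G$ maps onto $\mathrm{SL}_{2,\IQ}^{p^\prime}$, the matrix $A$ can here be chosen to be any element of $\mathrm{SL}_{2,\IQ}^{p^\prime}$, so for every $A \in \mathrm{SL}_{2,\IQ}^{p^\prime}$ and every $v \in U$ the image $(\mathrm{id}-A) \cdot v$ lies in $N$.

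The remaining step, which I expect to be the only point needing a modicum of thought, is to choose $A$ cleverly so that $\mathrm{id}_U - A$ is a linear bijection of $U$. Since $U$ decomposes as a direct sum of copies of the standard two-dimensional representations of the separate factors $\mathrm{SL}_{2,\IQ}$, with each factor acting only on its own summand, it suffices to pick each component $A_q \in \mathrm{SL}_2(\IQ)$ with $\det(\mathrm{id} - A_q) \neq 0$; for instance $A_q = \begin{pmatrix} 0 & -1 \\ 1 & 0 \end{pmatrix}$ gives $\det(\mathrm{id}-A_q) = 2$. With this choice of $A$, the map $v \mapsto v - A \cdot v$ is bijective on $U$, so $N \supseteq \{v - A \cdot v : v \in U\} = U$, hence $N = U$ and $G = H$. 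No genuine obstacle is anticipated; one need only be careful that the witness $A$ is rational, which the explicit choice above satisfies.
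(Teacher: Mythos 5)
Your proposal is correct and takes essentially the same route as the paper: both arguments conjugate an element of $G$ lying over a $\gamma\in\mathrm{SL}_{2}^{p^\prime}$ with $\det(\mathrm{id}-\gamma)\neq 0$ (the paper even picks the same rotation matrix) by elements of the unipotent radical $U$, and exploit the invertibility of $\mathrm{id}-\gamma$ on $U$. The only cosmetic difference is the finish: you multiply by $g^{-1}$ so the resulting elements land directly in $U\cap G$ and conclude $U\subseteq G$, whereas the paper deduces from the same computation that the fiber of $G\rightarrow\mathrm{SL}_{2,\IQ}^{p^\prime}$ over $\gamma$ has full dimension $2g$ and concludes by a dimension count.
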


\begin{proof}
	%It suffice to prove that $G(\IQbar) = \prod_{q=1}^{p} (\IQbar^{2g_q} \rtimes \SL_2(\IQbar))$. 
	We first consider the case $p^\prime=1$ and write $g$ instead of $g_1$. Note that for every $$(v_1,\dots,v_{g})\in (\IQbar^2)^{g}$$ and every $$(w_1,\dots,w_{g},\gamma) \in (\IQbar^2)^{g} \rtimes \SL_2(\IQbar),$$
	the conjugate
	\begin{equation*}
	\left(v_1,\dots,v_{g},\begin{pmatrix} 1 & 0 \\ 0 & 1 \end{pmatrix}\right)
	\cdot (w_1,\dots,w_{g},\gamma)
	\cdot \left(v_1,\dots,v_{g},\begin{pmatrix} 1 & 0 \\ 0 & 1 \end{pmatrix}\right)^{-1}
	\end{equation*}
	equals
	\begin{equation*}
	(v_1-\gamma(v_1) + w_1,\dots,v_{g}-\gamma(v_{g}) + w_{g},\gamma).
	\end{equation*}
	Choose now a $\gamma = \begin{pmatrix} a & b \\ c & d \end{pmatrix}\in \SL_2(\IQbar)$ such that 
	\begin{equation}
	\label{equation::invertible_matrix}
	\begin{pmatrix} 1 & 0 \\ 0 & 1\end{pmatrix} - \begin{pmatrix} a & b \\ c & d \end{pmatrix} = \begin{pmatrix} 1- a & -b \\ -c & 1-d \end{pmatrix}
	\end{equation}
	is invertible; an explicit admissible choice would be $a=d=0$ and $b = -c = 1$. By assumption, there exists some
	\begin{equation*}
	(w_1,\dots,w_g,\gamma) \in G(\IQbar).
	\end{equation*}
	Moreover, the normality of $G(\IQbar)$ implies that
	\begin{equation*}
	(v_1-\gamma(v_1) + w_1,\dots,v_{g}-\gamma(v_{g}) + w_{g},\gamma) \in G(\IQbar)
	\end{equation*}
	for all $(v_1,\dots,v_g) \in (\IQbar^2)^g$. The invertibility of \eqref{equation::invertible_matrix} implies that the preimage of $\gamma \in \SL_2(\IQbar)$ in $G(\IQbar) \subseteq \IQbar^{2g} \rtimes \mathrm{SL}_2(\IQbar)$ is of (algebraic) dimension $2g$ as each of the maps 
	\begin{equation*}
	\IQbar^2 \longrightarrow \IQbar^2: v_i \longmapsto v_i-\gamma(v_i), \ i \in \{1,\dots,g\},
	\end{equation*}
	is surjective. This means that the quotient map $q: \mathbb{G}_{a,\IQ}^{2g} \rtimes \mathrm{SL}_{2,\IQ} \rightarrow \mathrm{SL}_{2,\IQ}$ restricts to a surjective homomorphism $q|_G: G \rightarrow \mathrm{SL}_{2,\IQ}$ whose kernel is of dimension $2g$. This is only possible if $G= \mathbb{G}_{a,\IQ}^{2g} \rtimes \SL_{2,\IQ}$, whence the assertion of the lemma in case $p=1$.
	
	The general case $p>1$ can be proven similarly, working with a lifting of $(\gamma,\dots, \gamma) \in \SL_2(\IQbar^2)^p$ in $G(\IQbar)$ and using the above argument in parallel on each of the $p$ factors. This shows that the kernel of $q|_G: G \rightarrow \mathrm{SL}_{2,\IQ}^p$ is of dimension $\sum_{q=1}^p 2(i_{q+1} - i_q) = 2g$, which forces again the asserted equality.
\end{proof}

\section{The monodromy of $H|_X$}
\label{subsection::monodromy}

Let $x_0 \in X(\IC)$ be a sufficiently general point such that $\mathrm{MT}(H_{x_0})=\mathrm{MT}(H|_X)$ and let $\mathrm{Mon}(H|_X) \subseteq \mathrm{MT}(H|_X)(\IQ)$ denote the (ordinary) monodromy group at $x_0$ of the local system $\mathbb{V}_\IZ^g$ underlying $H|_X$. We write $\mathrm{Mon}^{\mathrm{alg}}(H|_X) \subseteq \mathrm{MT}(H|_X)$ for the connected component of its $\IQ$-algebraic closure (i.e., the (connected) algebraic monodromy group of $H|_X$ with base point $x_0$ in \cite{Andre1992}). By \cite[Theorem 1]{Andre1992}, the group $\mathrm{Mon}^{\mathrm{alg}}(H|_X)$ (resp.\ $\mathrm{Mon}^{\mathrm{alg}}(H_{-1}|_X)$) is a $\IQ$-normal subgroup of $\mathrm{MT}^{\mathrm{der}}(H|_X)$ (resp.\ $\mathrm{MT}^{\mathrm{der}}(H_{-1}|_X)$). (Note that the notion of ``good'' variation of mixed Hodge structures used in \cite{Andre1992} agrees with that of an admissible variation of mixed Hodge structures. In fact, the latter notion is even trivially stronger than the former, but a result of Kashiwara \cite[Theorem 4.5.2]{Kashiwara1986} allows also to prove the converse implication.)

In \cite{Habegger2012a}, it is proven that $\mathrm{Mon}^{\mathrm{alg}}(H_{-1}|_X) = \SL_{2,\IQ}^{p^\prime}$ in our situation (see the proof of Equation (10) in \textit{loc.cit.}). As the natural map $\mathrm{Mon}^{\mathrm{alg}}(H|_X) \rightarrow \mathrm{Mon}^{\mathrm{alg}}(H_{-1}|_X)$ is evidently surjective, we infer from another use of Lemma \ref{lemma::normality} that $\mathrm{Mon}^{\mathrm{alg}}(H|_X)$ projects onto $\prod_{q=1}^{p^\prime} ( \mathbb{G}_{a,\IQ}^{2g_q} \rtimes \mathrm{SL}_{2,\IQ})$. %Since $\mathrm{Mon}^{\mathrm{alg}}(H|_X)$ is defined as the closure of $\IQ$-rational points, this implies that the induced map $$\mathrm{Mon}^{\mathrm{alg}}(H|_X)(\IQ) \rightarrow \prod_{q=1}^{p^\prime} \IQ^{2g_q} \rtimes \mathrm{SL}_2(\IQ)$$ of $\IQ$-rational points is surjective.

Consider again the connected component $\widetilde{X}$ of $\mathcal{u}^{-1}_{\mathrm{mixed}}(X)$
from Section \ref{subsection::coverings} and its stabilizer $\mathrm{Stab}(\widetilde{X})$ under the action of $(\mathbb{Z}^{2} \rtimes \Gamma(\mathcal{N}))^g$ on $(\mathbb{C} \times \mathcal{H}_1)^g$. Both $\mathrm{Stab}(\widetilde{X})$ and $\mathrm{Mon}(H|_X)$ are canonically subgroups of $(\IZ^2 \rtimes \Gamma(\mathcal{N}))^g$, and in fact they are equal to each other. This seems well-known, but we include the argument here for lack of reference and convenience of the reader. For this purpose, we choose an arbitrary lifting $\widetilde{x}_0 \in \widetilde{X}$ of the point $x_0 \in X(\IC)$. If $\gamma \in \Stab_\Gamma(\widetilde{X})$, then there exists a path $\widetilde{\phi}: [0, 1] \rightarrow \widetilde{X}$ with $\widetilde{\phi}(0) = \widetilde{x}_0$ and $\widetilde{\phi}(1) = \gamma \cdot \widetilde{x}_0$. Through transport along the $\IZ$-local system $\widetilde{\mathbb V}^g_{\IZ} = \mathcal{u}_{\mathrm{mixed}}^{\ast}(\mathbb{V}_{\IZ}^g)$, the path $\widetilde{\phi}$ induces a $\IZ$-linear map
\begin{equation}
\label{equation::parallel_transport}
\mathbb{V}_{\IZ,x_0}^g = \widetilde{\mathbb{V}}_{\IZ,\widetilde{x}_0}^g \longrightarrow \widetilde{\mathbb{V}}_{\IZ, \gamma \cdot \widetilde{x}_0}^g = \mathbb{V}_{\IZ,x_0}^g, 
\end{equation}
which can be seen to equal $\gamma \in \mathrm{MT}(H_x)$ by unraveling definitions. Thus $\gamma \in \mathrm{Mon}(H|_X)$. If conversely $g \in \mathrm{Mon}(H|_X)$ is induced by a path $\phi: [0,1] \rightarrow X$ with $\phi(0)=\phi(1)=x$, then the lifting $\widetilde{\gamma}: [0,1] \rightarrow \widetilde{X}$ with $\widetilde{\phi}(0)=\widetilde{x}_0$ yields a point $\widetilde{\phi}(1)=\gamma \cdot \widetilde{x}_0$ for some $\gamma \in (\mathbb{Z}^{2} \rtimes \Gamma(\mathcal{N}))^g$ . Considering again \eqref{equation::parallel_transport}, we infer $\gamma = g$. This means that the intersection $g \widetilde{X} \cap \widetilde{X}$ is non-empty. As both $\widetilde{X}$ and $g \widetilde{X}$ are connected components of $\mathcal{u}_{\mathrm{mixed}}^{-1}(X)$, we infer that actually $\widetilde{X} = g \widetilde{X}$, whence $g \in \Stab(\widetilde{X})$.

\section{Separating holomorphic and anti-holomorphic terms}
\label{subsection::separation}

The $(\IZ^2 \rtimes \Gamma(\mathcal{N}))$-invariance of the $(1,1)$-form in \eqref{equation::invariant_form} implies that, for every local chart $\chi: B_1(0)^d \rightarrow \widetilde{X}$ and every $\gamma \in \mathrm{Stab}(\widetilde{X})$, the equations \eqref{equation::differential_equation_1} associated with the charts $\chi: B_1(0)^d \rightarrow \widetilde{X}$ and $\gamma \circ \chi: B_1(0)^d \rightarrow \widetilde{X}$ are equivalent. In order to extract non-trivial information from monodromy, we pass to the product $\widetilde{X} \times \widetilde{X} \subset (\IC \times \mathcal{H}_1)^{2g}$ and exploit the fact that both holomorphic and anti-holomorphic terms appear in \eqref{equation::differential_equation_1}. The author owes this important idea to \cite[Subsection 5.2]{Andre2020}. 

We write $\pr_i: \widetilde{X} \times \widetilde{X} \rightarrow \widetilde{X}$, $i \in \{1, 2\}$, for the projection to the $i$-th factor. On $\widetilde{X} \times \widetilde{X}$, we consider the holomorphic functions
\begin{align*}
z_l^{\sharp} = z_l \circ \pr_1, \ \tau_l^{\sharp} = \tau_l \circ \pr_1, \ 1\leq l \leq g,
\end{align*}
and the antiholomorphic functions
\begin{align*}
\overline{z}_l^{\flat} = \overline{z}_l \circ \pr_2, \ \overline{\tau}_l^{\flat} = \overline{\tau}_l \circ \pr_2, \ 1\leq l \leq g.
\end{align*}
We next deduce from the equations \eqref{equation::differential_equation_1} on $\widetilde{X}$ new equations constraining the analytic subvariety $\widetilde{X} \times \widetilde{X}$. These equations actually change under the product action of $(\IZ^2 \rtimes \Gamma(\mathcal{N}))^{2g}$ on $(\IC \times \mathcal{H}_1)^{2g}$, so that we can use the action of $\Stab(\widetilde{X}) \times \Stab(\widetilde{X})$ to obtain non-trivial information. Let $\chi_1: B_1(0)^d \rightarrow \widetilde{X}$ (resp.\ $\chi_2: B_1(0)^d \rightarrow \widetilde{X}$) be a chart with local coordinates $w_1^\sharp,\dots, w_d^\sharp$ (resp.\ $w_1^\flat,\dots,w_d^\flat$) on $B_1(0)^d$. To increase readability, we write here $f$ (resp.\ $\partial f/ \partial w_m^\sharp$, $\partial f/ \partial \overline{w}_m^\flat$) instead of $f \circ (\chi_1,\chi_2)$ (resp.\ $\partial (f \circ (\chi_1,\chi_2))/\partial w_m^\sharp$, $\partial (f \circ (\chi_1,\chi_2))/\partial \overline{w}_m^\flat$) for functions $f$ on $\widetilde{X} \times \widetilde{X}$. For each $j \in \{1,\dots, g\}$, we set
\begin{equation*}
\mathbf B_j^\prime =  \left( (\tau_l^\sharp - \overline{\tau}_l^\flat) \cdot \frac{\del z_l^\sharp}{\del w_m^\sharp} - (z_l^\sharp - \overline{z}_l^\flat) \cdot \frac{\del 
	\tau_l^\sharp}{\del w_m^\sharp} \right)_{\substack{l\in \{1,\dots, g\}, l \neq j \\ m \in \{1,\dots, d\}}},
\end{equation*}
and
\begin{equation*}
\mathbf C_j^\prime =  \left( (\tau_l^\sharp - \overline{\tau}_l^\flat) \cdot \frac{\del \overline{z}_l^\flat}{\del \overline{w}_m^\flat} - (z_l^\sharp - \overline{z}_l^\flat) \cdot \frac{\del \overline{\tau}_l^\flat}{\del \overline{w}_m^\flat} \right)_{\substack{l\in \{1,\dots, g\}, l \neq j \\ m \in \{1,\dots, d\}}}.
\end{equation*}
(Note that $\del z_l^\sharp/\del w_m^\sharp$ and $\del \tau_l^\sharp/\del w_m^\sharp$ (resp.\ $\overline{\del z_l^\flat/\del w_m^\flat} = \del \overline{z}_l^\flat/\del \overline{w}_m^\flat$ and $\overline{\del \tau_l^\flat/\del w_m^\flat}=\del \overline{\tau}_l^\flat/\del \overline{w}_m^\flat$) are holomorphic (resp.\ antiholomorphic) functions on $B_1(0)^d \times B_1(0)^d$.)
We claim that, for every choice of charts $\chi_i: B_1(0)^d \rightarrow \widetilde{X}$ ($i \in \{1,2\}$), the product chart $$(\chi_1,\chi_2): B_1(0)^d \times B_1(0)^d \rightarrow \widetilde{X} \times \widetilde{X},$$ satisfies the relations
\begin{equation}
\label{equation::differential_equation_2}
r_j(\tau_j^\sharp-\overline{\tau}_j^\flat)^3 \det(\mathbf B_j^\prime) \det(\mathbf C_j^\prime) 
= 
r_k(\tau_k^\sharp-\overline{\tau}_k^\flat)^3 \det(\mathbf B_k^\prime) \det(\mathbf C_k^\prime), \ j,k \in \{1,\dots g\},
\end{equation}
on $B_1(0)^d \times B_1(0)^d$. With this aim in mind, we consider the set
\begin{equation*}
\mathcal{R} = \bigcup_{(\chi_1, \chi_2)} (\chi_1(0),\chi_2(0))
\end{equation*}
where $$(\chi_1: B_1(0)^d \rightarrow \widetilde{X},\chi_2: B_1(0)^d \rightarrow \widetilde{X})$$ ranges through all pairs of charts on $\widetilde{X}$ such that $\chi_1 \times \chi_2$ satisfies the relation \eqref{equation::differential_equation_2} at $(0,0)$. The following three statements are equivalent:
\begin{enumerate}
	\item $(\widetilde{x}_1,\widetilde{x}_2) \in \mathcal{R}$.
	\item There exists a pair of charts $(\chi_1: B_1(0)^d \rightarrow \widetilde{X},\chi_2: B_1(0)^d \rightarrow \widetilde{X})$ such that $(\widetilde{x}_1,\widetilde{x}_2) \in \im(\chi_1 \times \chi_2)$ and \eqref{equation::differential_equation_2} is satisfied at $(\chi_1,\chi_2)^{-1}(\widetilde{x}_1,\widetilde{x}_2)$.
	\item For every pair of charts $(\chi_1: B_1(0)^d \rightarrow \widetilde{X},\chi_2: B_1(0)^d \rightarrow \widetilde{X})$ such that $(\widetilde{x}_1,\widetilde{x}_2) \in \im(\chi_1 \times \chi_2)$, the equation \eqref{equation::differential_equation_2} is satisfied at $(\chi_1,\chi_2)^{-1}(\widetilde{x}_1,\widetilde{x}_2)$.
\end{enumerate}
Indeed, the implications $(3)\Rightarrow (2)$ and $(2) \Rightarrow (1)$ are trivial and the implication $(1)\Rightarrow (3)$ follows from the fact that \eqref{equation::differential_equation_2} is independent under transformations of the local coordinates $w_1^\sharp,\dots, w_d^\sharp,w_1^\flat,\dots,w_d^\flat$. From these equivalences, we deduce that $\mathcal{R}$ is locally cut out by a real-analytic equation. We also see that our above claim amounts to $\mathcal{R} = \widetilde{X} \times \widetilde{X}$.

Since $\widetilde{X} \times \widetilde{X}$ is irreducible (as a complex-analytic subset of $(\IC \times \mathcal{H}_1)^{2g}$), it suffices hence to show that $\mathcal{R}$ contains a non-empty open subset of $\widetilde{X} \times \widetilde{X}$. For this purpose, we consider an arbitrary point $(\widetilde{x},\widetilde{x}) \in \widetilde{X} \times \widetilde{X}$ on the diagonal. Let $\chi: B_1(0)^d \rightarrow \widetilde{X}$ be a chart such that $\chi(0)=\widetilde{x}$. By the above equivalences, the real-analytic set $(\chi, \chi)^{-1}(\mathcal{R}) \subseteq B_1(0)^d \times B_1(0)^d$ coincides with
\begin{multline*}
\mathcal{R}_{(\chi,\chi)} = \left\{ (w_1^\sharp,\dots,w_d^\sharp, w_1^\flat,\dots,w_d^\flat) \in B_1(0)^d \times B_1(0)^d \right. \\
 \left\vert \
\text{$(\chi \times \chi)$ satisfies \eqref{equation::differential_equation_2} at $(w_1^\sharp,\dots,w_d^\sharp, w_1^\flat,\dots,w_d^\flat)$}
\right\}.
\end{multline*}
Writing $\overline{(\cdot)}: B_1(0)^d \rightarrow B_1(0)^d$ for the component-wise complex conjugation, the set $(\id_{B_1(0)^d} \times \overline{(\cdot)})^{-1}(\mathcal{R}_{(\chi,\chi)})$ is a \textit{complex}-analytic subset of $B_1(0)^d \times B_1(0)^d$ as can be seen by inspecting \eqref{equation::differential_equation_2}; this relies on the following elementary fact: If $f(w_1,\dots,w_d) = \sum_{i_1,\dots,i_d} a_{i_1,\dots,i_d} w_1^{i_1}\cdots w_d^{i_d}$ is a holomorphic function on $B_1(0)^d$, then $\overline{f(\overline{w}_1,\dots,\overline{w}_d)}= \sum_{i_1,\dots,i_d} \overline{a_{i_1,\dots,i_d}} w_1^{i_1}\cdots w_d^{i_d}$ is holomorphic as well. Furthermore, the fact that the chart $\chi$ satisfies the previous equation \eqref{equation::differential_equation_1} implies immediately that
\begin{align*}
\Delta_{\mathrm{skew}} 
&= \{ (w_1,\dots, w_d, \overline{w}_1,\dots, \overline{w}_d) \ | \ (w_1,\dots, w_d) \in B_1(0)^d \} \\
&\subseteq (\id_{B_1(0)^d} \times \overline{(\cdot)})^{-1}(\mathcal{R}_{(\chi,\chi)});
\end{align*}
in fact, plugging in the local coordinates $(w_1,\dots,w_d,w_1,\dots,w_d)$ into the equation \eqref{equation::differential_equation_2} for the chart $(\chi \times \chi)$ yields the original equation \eqref{equation::differential_equation_1} for the chart $\chi$ back. As the smallest complex-analytic set of $B_1(0)^d \times B_1(0)^d$ containing $\Delta_{\mathrm{skew}}$ is $B_1(0)^d \times B_1(0)^d$, we infer that
\begin{equation*}
(\id_{B_1(0)^d} \times \overline{(\cdot)})^{-1}(\mathcal{R}_{(\chi,\chi)})=B_1(0)^d \times B_1(0)^d
\end{equation*}
and hence $\mathcal{R}_{(\chi,\chi)}=B_1(0)^d \times B_1(0)^d$, whence $\mathcal{R} = \widetilde{X} \times \widetilde{X}$. In other words, the equations \eqref{equation::differential_equation_2} are satisfied on all of $\widetilde{X} \times \widetilde{X}$.

\section{Enter monodromy}
\label{section::monodromy}

Let $\chi_i: B_1(0)^d \rightarrow \widetilde{X}$, $i \in \{1,2\}$, two local charts and $\gamma \in \mathrm{Stab}(\widetilde{X})$. As $\gamma(\widetilde{X}) = \widetilde{X}$, the composite $\gamma \circ \chi_1: B_1(0)^d \rightarrow \widetilde{X}$ is a local chart of $\widetilde{X}$ as well. Writing
\begin{equation*}
\gamma = \left(
\begin{pmatrix} m_1 \\ n_1 \end{pmatrix},
\begin{pmatrix} a_1 & b_1 \\ c_1 & d_1 \end{pmatrix},
\dots, 
\begin{pmatrix} m_g \\ n_g \end{pmatrix}, 
\begin{pmatrix} a_g & b_g \\ c_g & d_g \end{pmatrix} \right),
\end{equation*}
we note that
\begin{equation*}
z_l \circ \gamma = \frac{z_l+m_l+n_l\tau_l}{c_l\tau_l+d_l} \ \ \text{and} \ \ \tau_l \circ \gamma = \frac{a_l\tau_l+b_l}{c_l\tau_l+d_l}, \ l \in \{1, \dots, g \},
\end{equation*}
as functions on $\widetilde{X}$. We infer that
\begin{equation*}
z_l^\sharp \circ (\gamma,\id) = \frac{z_l^\sharp+m_l+n_l\tau_l^\sharp}{c_l\tau_l^\sharp+d_l}
\ \ \text{and} \ \
\tau_l^\sharp \circ (\gamma, \id) = \frac{a_l\tau_l^\sharp+b_l}{c_l\tau_l^\sharp+d_l}, \ l \in \{1, \dots, g \},
\end{equation*}
as functions on $\widetilde{X} \times \widetilde{X}$. We also need the derivatives
\begin{equation*}
\frac{\del (z_l^\sharp \circ (\gamma,\id))}{\del w_m^\sharp} = \frac{1}{(c_l\tau_l^\sharp+d_l)}\cdot \left(\frac{\del z_l^\sharp}{\del w_m^\sharp} + n_l \cdot \frac{\del \tau_l^\sharp}{\del w_m^\sharp}\right)
- \frac{c_l(z_l^\sharp+m_l+n_l\tau_l^\sharp)}{(c_l\tau_l^\sharp+d_l)^2} \cdot \frac{\del \tau_l^\sharp}{\del w_m^\sharp}
\end{equation*}
and
\begin{equation*}
\frac{\del (\tau_l^\sharp \circ (\gamma,\id))}{\del w_m^\sharp} = \frac{1}{(c_l\tau_l^\sharp+d_l)^2} \cdot \frac{\del \tau_l^\sharp}{\del w_m^\sharp}
\end{equation*}
on $B_1(0)^d \times B_1(0)^d$. With this preparation, we can compute the equations \eqref{equation::differential_equation_2} for the chart
\begin{equation*}
(\gamma \circ \chi_1, \chi_2): B_1(0)^d \times B_1(0)^d \longrightarrow \widetilde{X} \times \widetilde{X};
\end{equation*}
these are
\begin{multline}
\label{equation::differential_equation_3}
r_j\left(\frac{a_j\tau_j^\sharp+b_j}{c_j\tau_j^\sharp+d_j}-\overline{\tau}_j^\flat\right)^3 \det(\mathbf B_j^{\prime \prime}) \det(\mathbf C_j^{\prime \prime}) 
= 
r_k\left(\frac{a_k\tau_k^\sharp+b_k}{c_k\tau_k^\sharp+d_k}-\overline{\tau}_k^\flat\right)^3 \det(\mathbf B_k^{\prime \prime}) \det(\mathbf C_k^{\prime \prime}), \\ j,k \in \{1,\dots g\},
\end{multline}
with the $(d \times d)$-matrices 
\begin{equation*}
(\mathbf B_j^{\prime\prime})_{\substack{l\in \{1,\dots, g\}, l \neq j \\ m \in \{1,\dots, d\}}}
\text{\ \  and \ \ }
(\mathbf C_j^{\prime\prime})_{\substack{l\in \{1,\dots, g\}, l \neq j \\ m \in \{1,\dots, d\}}}
\end{equation*}
where
\begin{multline*}
(\mathbf B_j^{\prime\prime})_{lm} = 
\left(\frac{a_l\tau_l^\sharp+b_l}{c_l\tau_l^\sharp+d_l} - \overline{\tau}_l^\flat\right) \cdot \left( \frac{1}{(c_l\tau_l^\sharp+d_l)}\cdot \left[\frac{\del z_l^\sharp}{\del w_m^\sharp} + n_l \cdot \frac{\del \tau_l^\sharp}{\del w_m^\sharp}\right] - \frac{c_l(z_l^\sharp+m_l+n_l\tau_l^\sharp)}{(c_l\tau_l^\sharp+d_l)^2} \cdot \frac{\del \tau_l^\sharp}{\del w_m^\sharp} \right)  \\ - \left(\frac{z_l^\sharp+m_l+n_l\tau_l^\sharp}{c_l\tau_l^\sharp+d_l} - \overline{z}_l^\flat\right) \cdot \frac{1}{(c_l\tau_l^\sharp+d_l)^2} \cdot \frac{\del \tau_l^\sharp}{\del w_m^\sharp}
\end{multline*}
and
\begin{equation*}
(\mathbf C_j^{\prime\prime})_{lm} = \left(\frac{a_l\tau_l^\sharp+b_l}{c_l\tau_l^\sharp+d_l} - \overline{\tau}_l^\flat\right) \cdot \frac{\del \overline{z}_l^\flat}{\del \overline{w}_m^\flat} - \left(\frac{z_l^\sharp+m_l+n_l\tau_l^\sharp}{c_l\tau_l^\sharp+d_l} - \overline{z}_l^\flat\right) \cdot \frac{\del \overline{\tau}_l^\flat}{\del \overline{w}_m^\flat}.
\end{equation*}

In summary, each pair $(\chi_1,\chi_2)$ of charts $\chi_i: B_1(0)^d \rightarrow \widetilde{X}$, $i \in \{1,2\}$, does not only satisfy the equation \eqref{equation::differential_equation_2}, but also the equations \eqref{equation::differential_equation_3} for all $\gamma \in \mathrm{Stab}(\widetilde{X})$. Moreover, we can consider each of these equations at each point of $B_1(0)^d \times B_1(0)^d$ as an algebraic equation on $\gamma \in \mathrm{Stab}(\widetilde{X})$, giving a $\mathbb{C}$-algebraic hypersurface of $\mathrm{MT}(H|_X)$ containing the $\IQ$-rational points $\mathrm{Mon}(H|_X) = \mathrm{Stab}(\widetilde{X})$. By \cite[Corollary AG.14.6]{Borel1991}, this hypersurface contains also the algebraic monodromy group $\mathrm{Mon}^{\mathrm{alg}}(H|_X)$, which is the $\mathbb{Q}$-algebraic closure of these $\IQ$-rational points. We infer that each pair $(\chi_1,\chi_2)$ satisfies \eqref{equation::differential_equation_3} for all $\gamma \in \mathrm{Mon}^{\mathrm{alg}}(H|_X)(\IQ)$.

\section{A non-vanishing determinant}
\label{section::technical}

We make a final reduction before we start exploiting the equations \eqref{equation::differential_equation_3} obtained in the last section. To be precise, we show that we can assume the following: For each chart
\begin{equation*}
\chi_0: B_1(0)^d \longrightarrow \widetilde{X}, \
\underline{w} = (w_1,\dots,w_d) \longmapsto \left( z_l \circ \chi_0(\underline{w}), \tau_l \circ \chi_0(\underline{w}) \right)_{1\leq l \leq g},
\end{equation*}
the determinant
\begin{equation}
\label{equation::non_vanishing_determinant}
\det \left( \left( \frac{\del (z_l \circ \chi_0)}{\del w_m} \right)_{\substack{l\in \{1,\dots, g-1\} \\ m \in \{1,\dots, d\}}} \right)
\end{equation}
is a \textit{non-zero} holomorphic function on $B_1(0)^d$. Note that this condition holds for every chart if and only if it holds for a single one.

For each finite map $S^\prime \rightarrow S$, note that (RBC) for a subvariety $X$ in a family $\pi: A \rightarrow S$ is equivalent to (RBC) for the subvariety $X_{S^\prime} = X \times_S S^\prime$ in the family $\pi_{S^\prime}: A \times_S S^\prime \rightarrow S^\prime$. Furthermore, (RBC) for a subvariety $X \subseteq A$ is equivalent to (RBC) for any translate $X + \tau \subseteq A$ by a torsion section $\tau: S \rightarrow A$. By assumption, $S$ is a subvariety of $Y(\mathcal{N})$ and $A = \mathcal{E}(\mathcal{N})|_S$. 

Let $\underline{q}=(s_1/t_1,\dots,s_g/t_g) \in \IQ^g$ be given with $\gcd(s_i,t_i)=1$ for all $i \in \{1,\dots,g \}$, we set $\mathcal{N}^\prime = \mathrm{lcm}(t_1,\dots,t_g,\mathcal{N})$, so that all torsion points of order $\mathrm{lcm}(t_1,\dots,t_g)$ in the generic fiber $\mathcal{E}(\mathcal{N}^\prime)_{\eta_{Y(\mathcal{N}^\prime)}}$ extend to torsion sections $Y(\mathcal{N}^\prime) \rightarrow \mathcal{E}(\mathcal{N}^\prime)$. Writing $\xi_{\mathcal{N}^\prime/\mathcal{N}}: \mathcal{E}(\mathcal{N}^\prime) \rightarrow \mathcal{E}(\mathcal{N})$ for the standard covering and setting $X^\prime_{\underline{q}} = \xi_{\mathcal{N}^\prime/\mathcal{N}}^{-1}(X)$, the analytic variety $\widetilde{X}$ is also a connected component of $\mathcal{u}_{\mathrm{mixed}}^{-1}(X^\prime_{\underline{q}})$. Thus, there exists a torsion section $\tau: Y(\mathcal{N}^\prime) \rightarrow \mathcal{E}(\mathcal{N}^\prime)$ such that the translate
\begin{equation*}
\widetilde{X}_{\underline{q}} = \left\{ \left(z_i + \frac{s_i}{t_i} \cdot \tau_i,\tau_i\right)_{1\leq i \leq g} \in (\IC \times \mathcal{H}_1)^g \ \middle| \ \left(z_i,\tau_i\right)_{1\leq i \leq g} \in \widetilde{X} \right\}
\end{equation*}
is a connected component of $\mathcal{u}_{\mathrm{mixed}}^{-1}(X^\prime_{\underline{q}} + \tau)$.

For a fixed local chart
\begin{equation*}
\chi_0: B_1(0)^d \longrightarrow \widetilde{X}, \
\underline{w} \longmapsto \left( z_i \circ \chi_0(\underline{w}), \tau_i \circ \chi_0(\underline{w}) \right)_{1\leq i \leq g},
\end{equation*}
each of its translates 
\begin{equation*}
\chi_{0,\underline{q}}: B_1(0)^d \longrightarrow \widetilde{X}_{\underline{q}}, \
\underline{w} \longmapsto \left( z_i \circ \chi_0(\underline{w}) + \frac{s_i}{t_i} \cdot \tau_i, \tau_i \circ \chi_0(\underline{w}) \right)_{1\leq i \leq g},
\end{equation*}
is a chart of $\widetilde{X}_{\underline{q}}$. As (RBC) for $X^\prime_{\underline{q}}$ is equivalent to (RBC) for $X$ by the above remarks, it suffices to prove that the determinant \eqref{equation::non_vanishing_determinant} is a non-zero holomorphic function for a single chart $\chi_{0,\underline{q}}$ ($\underline{q} \in \IQ^g$). If this would not be the case, then
\begin{equation*}
\det \left( \left( \frac{\del (z_l \circ \chi_0)}{\del w_m} + \frac{s_l}{t_l} \cdot \frac{\del (\tau_l \circ \chi_0)}{\del w_m} \right)_{\substack{l\in \{1,\dots, g-1\} \\ m \in \{1,\dots, d\}}} \right) (\underline{w}) = 0
\end{equation*}
for all $(s_1/t_1,\cdots,s_g/t_g) \in \IQ^g$ and all $\underline{w} \in B_1(0)^d$. By continuity, this means
\begin{equation*}
\det \left( \left( \frac{\del (z_l \circ \chi_0)}{\del w_m} + u_l \cdot \frac{\del (\tau_l \circ \chi_0)}{\del w_m} \right)_{\substack{l\in \{1,\dots, g - 1\} \\ m \in \{1,\dots, d\}}} \right)(\underline{w}) = 0
\end{equation*}
for all $(u_1,\dots,u_g) \in \IR^g$ and all $\underline{w} \in B_1(0)^d$. In particular, we can take
\begin{equation*}
u_l = \frac{-\mathrm{Im}(z_l \circ \chi_0)}{\mathrm{Im}(\tau_l \circ \chi_0)}(\underline{w}), \ 1 \leq l \leq g,
\end{equation*}
so that
\begin{equation}
\label{equation::volume_zero}
\det \left( \left( \frac{\del (z_l \circ w_0)}{\del w_m} - \frac{\mathrm{Im}(z_l)}{\mathrm{Im}(\tau_l)} \cdot \frac{\del (\tau_l \circ \chi_0)}{\del w_m} \right)_{\substack{l\in \{1,\dots, g-1\} \\ m \in \{1,\dots, d\}}} \right)(\underline{w}) = 0
\end{equation}
for all $\underline{w} \in B_1(0)^d$. Comparing with \eqref{equation::alpha_j_prime}, we infer that $\alpha_{1}^\prime = 0$, which is a clear contradiction to the non-degeneracy of $\pr_{\widehat{1}}(X)$.

In summary, we can use without loss of generality that \eqref{equation::non_vanishing_determinant} is a non-zero holomorphic function for each chart $\chi: B_1(0)^d \rightarrow \widetilde{X}$ and a fixed $j_0 \in \{1, \dots, g\}$.

\section{Proof that $p^\prime = 0$ or $p^\prime = p$}
\label{section::reduction1}

We claim that either all the families $\pi: E_i \rightarrow S$ are constant (i.e., $p^\prime=0$) or non-constant (i.e., $p^\prime=p$). For this purpose, assume that $p^\prime\geq 1$ so that the family $E_{1} = \cdots = E_{i_2-1} \rightarrow S$ is non-isotrivial and that the family $E_{i_p} = \cdots = E_{g} \rightarrow S$ is constant. 

From Section \ref{subsection::monodromy}, we know that $\mathrm{Mon}^{\mathrm{alg}}(H|_X) \subseteq (\mathbb{G}_{a,\IQ}^2 \rtimes \mathrm{SL}_{2,\IQ})^g$ projects onto the first $p^\prime$ factors $\prod_{q=1}^{p^\prime} ( \mathbb{G}_{a,\IQ}^{2g_q} \rtimes \mathrm{SL}_{2,\IQ})$, which gives rise to a surjective map between their $\IQbar$-points. For every integer $N$, there hence exists an element
\begin{equation*}
\gamma =
\left(
\begin{pmatrix} m_1 \\ n_1 \end{pmatrix},
\begin{pmatrix} a_1 & b_1 \\ c_1 & d_1 \end{pmatrix},
\dots, 
\begin{pmatrix} m_g \\ n_g \end{pmatrix}, 
\begin{pmatrix} a_g & b_g \\ c_g & d_g \end{pmatrix} \right) 
\in \mathrm{Mon}^{\mathrm{alg}}(H|_X)(\IQbar)
\end{equation*} 
with
\begin{equation*}
a_l = 1, \ b_l = N, \ c_l = 0, \ d_l = 1, \ m_l = 0, \ n_l = 0
\end{equation*}
for all $1 \leq l \leq i_{p^\prime+1}-1$. Since the families $E_l$ ($i_{p^\prime+1} \leq l \leq g$) are trivial, we have furthermore
\begin{equation*}
a_l = 1, \ b_l = 0, \ c_l = 0, \ d_l = 1
\end{equation*}
for all $i_{p^\prime+1} \leq l \leq g$. Let us additionally fix a chart $\chi_0: B_1(0)^d \rightarrow \widetilde{X}$. Specializing to the chart $(\gamma \circ \chi_0, \chi_0): B_1(0)^d \times B_1(0)^d \rightarrow \widetilde{X} \times \widetilde{X}$, equation \eqref{equation::differential_equation_3} for $j=1$ and $k=g$ becomes
\begin{equation}
\label{equation::differential_equation_4}
r_1(\tau_1^\sharp+N-\overline{\tau}_1^\flat)^3 \det(\mathbf B_1^{\prime \prime}) \det(\mathbf C_1^{\prime \prime}) 
= 
r_g(\tau_g^\sharp-\overline{\tau}_g^\flat)^3 \det(\mathbf B_g^{\prime \prime}) \det(\mathbf C_g^{\prime \prime})
\end{equation}
with
\begin{equation*}
\mathbf B_j^{\prime\prime} = \left( (\tau_l^\sharp + b_l - \overline{\tau}_l^\flat) \cdot \frac{\del z_l^\sharp}{\del w_m^\sharp} - (z_l^\sharp - \overline{z}_l^\flat) \cdot \frac{\del \tau_l^\sharp}{\del w_m^\sharp}\right)_{\substack{l\in \{1,\dots, g\}, l \neq j \\ m \in \{1,\dots, d\}}}
\end{equation*}
and
\begin{equation*}
\mathbf C_j^{\prime\prime} = \left((\tau_l^\sharp+b_l - \overline{\tau}_l^\flat) \cdot \frac{\del \overline{z}_l^\flat}{\del \overline{w}_m^\flat} - (z_l^\sharp - \overline{z}_l^\flat) \cdot \frac{\del \overline{\tau}_l^\flat}{\del \overline{w}_m^\flat}\right)_{\substack{l\in \{1,\dots, g\}, l \neq j \\ m \in \{1,\dots, d\}}}.
\end{equation*}
We can consider the difference between the left-hand and the right-hand side of \eqref{equation::differential_equation_4} as a polynomial over the ring of (real-analytic) functions on $B_1(0)^d \times B_1(0)^d$ and indeterminate $N$. As this polynomial vanishes for each integer, it has to vanish identically. Expanding the left-hand side of the equation for the term of highest order in $N$, we note that its term of highest degree in $N$ is
\begin{equation*}
r_1 \prod_{l = i_{p^\prime+1}}^g (\tau_l^\sharp - \overline{\tau}^\flat_l)^2 \cdot \det \left( \left( \frac{\del z_l^\sharp}{\del w_m^\sharp}\right)_{\substack{l\in \{2,\dots, g\} \\ m \in \{1,\dots, d\}}} \right) \det \left( \left( \frac{\del \overline{z}_l^\flat}{\del \overline{w}_m^\flat}\right)_{\substack{l\in \{2,\dots, g\} \\ m \in \{1,\dots, d\}}} \right) N^{2i_{p^\prime +1}-1}
\end{equation*}
by our assumption on the non-vanishing of \eqref{equation::non_vanishing_determinant}.
However, the leading term on the right-hand side has degree $\leq 2(i_{p^\prime+1}-1) = 2i_{p^\prime+1}-2$. From this contradiction, we conclude that $p^\prime=0$ or $p^\prime = p$. Note that by our assumptions in Section \ref{subsection::reductions}, $p^\prime=0$ implies $\dim(S)=0$. This means that $A$ is just an abelian variety, for which (RBC) is proven in \cite{Zhang1998}. We hence concentrate on the case $p^\prime = p$ in the following.

\section{Existence of generic isogenies}
\label{subsection::diagonal}

In this section, we prove that the generic fibers $E_{i,\eta}$ ($1\leq i \leq g$) are all isogeneous (i.e., $p=1$) in the remaining case that all families $E_i \rightarrow S$ ($1\leq i \leq g$) are non-isotrivial (i.e., $p^\prime = p$). Let $M_q$, $1\leq q \leq p$, be arbitrary integers and set
\begin{equation*}
N_{i_q} = N_{i_q+1} = \cdots = N_{i_{q+1}-1} = M_q
\end{equation*}
for all $1 \leq q \leq p$. Again by the results from Section \ref{subsection::monodromy}, we know that $\mathrm{Mon}^{\mathrm{alg}}(H|_X) = \prod_{q=1}^{p} ( \mathbb{G}_{a,\IQ}^{2g_q} \rtimes \mathrm{SL}_{2,\IQ})$ (diagonally embedded in $(\mathbb{G}_{a,\IQ}^2 \rtimes \mathrm{SL}_{2,\IQ})^g$). Thus, there exists an element
\begin{equation*}
\gamma =
\left(
\begin{pmatrix} m_1 \\ n_1 \end{pmatrix},
\begin{pmatrix} a_1 & b_1 \\ c_1 & d_1 \end{pmatrix},
\dots, 
\begin{pmatrix} m_g \\ n_g \end{pmatrix}, 
\begin{pmatrix} a_g & b_g \\ c_g & d_g \end{pmatrix} \right) 
\in \mathrm{Mon}^{\mathrm{alg}}(H|_X)(\IQ)
\end{equation*} 
with
\begin{equation*}
a_l = 1, \ b_l = N_l, \ c_l = 0, \ d_l = 1, \ m_l = 0, \ n_l = 0
\end{equation*}
for all $1 \leq l \leq g$. Since there is nothing to prove if $p=1$, we can assume that there exists $k \in \{i_2,\dots, i_3 -1 \}$. Specializing again to the chart $(\gamma \circ \chi_0, \chi_0): B_1(0)^d \times B_1(0)^d \rightarrow \widetilde{X} \times \widetilde{X}$, the equation \eqref{equation::differential_equation_3} for $j=1$ and $k$ as here becomes
\begin{multline}
\label{equation::differential_equation_5}
r_1(\tau_1^\sharp+N_1-\overline{\tau}_1^\flat)^3 \det(\mathbf B_1^{\prime \prime}) \det(\mathbf C_1^{\prime \prime}) 
= 
r_k(\tau_k^\sharp+N_k-\overline{\tau}_k^\flat)^3 \det(\mathbf B_k^{\prime \prime}) \det(\mathbf C_k^{\prime \prime})
\end{multline}
with the $(d \times d)$-matrices
\begin{equation*}
\mathbf B_j^{\prime\prime} = \left( (\tau_l^\sharp+N_l - \overline{\tau}_l^\flat) \cdot \frac{\del z_l^\sharp}{\del w_m^\sharp} - (z_l^\sharp - \overline{z}_l^\flat) \cdot \frac{\del \tau_l^\sharp}{\del w_m^\sharp} \right)_{\substack{l\in \{1,\dots, g\}, l \neq j \\ m \in \{1,\dots, d\}}}
\end{equation*}
and
\begin{equation*}
\mathbf C_j^{\prime\prime} = \left( (\tau_l^\sharp+N_l - \overline{\tau}_l^\flat) \cdot \frac{\del \overline{z}_l^\flat}{\del \overline{w}_m^\flat} - (z_l^\sharp - \overline{z}_l^\flat)  \cdot \frac{\del \overline{\tau}_l^\flat}{\del \overline{w}_m^\flat} \right)_{\substack{l\in \{1,\dots, g\}, l \neq j \\ m \in \{1,\dots, d\}}}.
\end{equation*}
The difference of both sides in \eqref{equation::differential_equation_5} can be considered as a multivariate polynomial in the variables $M_q = N_{i_q} = \cdots = N_{i_{q+1}-1} $, $1\leq q \leq p$, of total degree $\leq (2d+3)$, which has to vanish identically because its evaluations for  all $(M_1,\dots,M_p) \in \IZ^p$ vanish. In fact, the sum of its terms of highest degree $2d+3$ is
\begin{multline*}
r_1 \det \left( \left( \frac{\del z_l^\sharp}{\del w_m^\sharp}\right)_{\substack{l\in \{2,\dots, g\} \\ m \in \{1,\dots, d\}}} \right) \det \left( \left( \frac{\del \overline{z}_l^\flat}{\del \overline{w}_m^\flat}\right)_{\substack{l\in \{2,\dots, g\} \\ m \in \{1,\dots, d\}}} \right) \cdot M_1 \prod_{l=1}^g N_l^2 \\
-
r_k \det \left( \left( \frac{\del z_l^\sharp}{\del w_m^\sharp}\right)_{\substack{l\in \{1,\dots, g\}, l \neq k \\ m \in \{1,\dots, d\}}} \right) \det \left( \left( \frac{\del \overline{z}_l^\flat}{\del \overline{w}_m^\flat}\right)_{\substack{l\in \{1,\dots, g\}, l \neq k \\ m \in \{1,\dots, d\}}} \right) \cdot M_2 \prod_{l=1}^g N_l^2.
\end{multline*}
Since the two terms contain different monomials in $M_1,\dots,M_p$, their coefficients must vanish. As in the previous section, the vanishing of the first coefficient yields a contradiction to the non-vanishing of \eqref{equation::non_vanishing_determinant}, whence $p=1$. Recall that this implies
\begin{equation*}
E_1 = E_2 = \cdots = E_g
\end{equation*}
by the assumptions made in Subsection \ref{subsection::mumfordtategroup}. Our assumptions on $S$ from Section \ref{subsection::reductions} imply furthermore that $S$ is the diagonal of $\mathcal{E}(\mathcal{N})^g$ in this case.

\section{Existence of a linear equation on $z_l|_{\widetilde{X}}$ ($1\leq l \leq g$).}
\label{section::linear}

In this section, we deduce a linear equation governing the restrictions of the functions $z_l$, $1\leq l \leq g$, to $\widetilde{X}$. For each integer $N$, there exists an element
\begin{equation*}
\gamma =
\left(
\begin{pmatrix} m_1 \\ n_1 \end{pmatrix},
\begin{pmatrix} a_1 & b_1 \\ c_1 & d_1 \end{pmatrix},
\dots, 
\begin{pmatrix} m_g \\ n_g \end{pmatrix}, 
\begin{pmatrix} a_g & b_g \\ c_g & d_g \end{pmatrix} \right) 
\in \mathrm{Mon}^{\mathrm{alg}}(H|_X)(\IQ)
\end{equation*} 
with
\begin{equation*}
a_l = 1, \ b_l = N, \ c_l = 0, \ d_l = 1, \ m_l = 0, \ n_l = 0
\end{equation*}
for all $1 \leq l \leq g$. Specializing again to the chart $$(\gamma \circ \chi_0, \chi_0): B_1(0)^d \times B_1(0)^d \rightarrow \widetilde{X} \times \widetilde{X},$$ the equations \eqref{equation::differential_equation_3} become
\begin{multline}
\label{equation::differential_equation_6}
r_j(\tau_j^\sharp+N-\overline{\tau}_j^\flat)^3 \det(\mathbf B_j^{\prime \prime}) \det(\mathbf C_j^{\prime \prime}) 
= 
r_k(\tau_k^\sharp+N-\overline{\tau}_k^\flat)^3 \det(\mathbf B_k^{\prime \prime}) \det(\mathbf C_k^{\prime \prime}), \\ j,k \in \{1,\dots, g\},
\end{multline}
with the $(d \times d)$-matrices
\begin{equation*}
\mathbf B_j^{\prime\prime} = \left( (\tau_l^\sharp+N - \overline{\tau}_l^\flat) \cdot \frac{\del z_l^\sharp}{\del w_m^\sharp} - (z_l^\sharp - \overline{z}_l^\flat) \cdot \frac{\del \tau_l^\sharp}{\del w_m^\sharp} \right)_{\substack{l\in \{1,\dots, g\}, l \neq j \\ m \in \{1,\dots, d\}}}
\end{equation*}
and
\begin{equation*}
\mathbf C_j^{\prime\prime} = \left( (\tau_l^\sharp+N - \overline{\tau}_l^\flat) \cdot \frac{\del \overline{z}_l^\flat}{\del \overline{w}_m^\flat} - (z_l^\sharp - \overline{z}_l^\flat)  \cdot \frac{\del \overline{\tau}_l^\flat}{\del \overline{w}_m^\flat} \right)_{\substack{l\in \{1,\dots, g\}, l \neq j \\ m \in \{1,\dots, d\}}}.
\end{equation*}
Regarding again the difference of both sides of \eqref{equation::differential_equation_6} as a polynomial in the indeterminate $N$ with coefficients in the ring of (real-analytic) functions on $B_1(0)^d \times B_1(0)^d$ and considering the term of highest degree, we obtain
\begin{multline*}
r_j \det \left( \left( \frac{\del z_l^\sharp}{\del w_m^\sharp}\right)_{\substack{l\in \{1,\dots, g\}, l \neq j \\ m \in \{1,\dots, d\}}} \right) \det \left( \left( \frac{\del \overline{z}_l^\flat}{\del \overline{w}_m^\flat}\right)_{\substack{l\in \{1,\dots, g\}, l \neq j \\ m \in \{1,\dots, d\}}} \right) \\
=
r_k \det \left( \left( \frac{\del z_l^\sharp}{\del w_m^\sharp}\right)_{\substack{l\in \{1,\dots, g\}, l \neq k \\ m \in \{1,\dots, d\}}} \right) \det \left( \left( \frac{\del \overline{z}_l^\flat}{\del \overline{w}_m^\flat}\right)_{\substack{l\in \{1,\dots, g\}, l \neq k \\ m \in \{1,\dots, d\}}} \right)
\end{multline*}
Specializing to the diagonal $B_1(0)^d \subset B_1(0)^d \times B_1(0)^d$, this yields
\begin{equation*}
r_j
\left\vert \det \left( \begin{pmatrix}
\frac{\del z_l}{\del w_m}
\end{pmatrix}_{\substack{l\in \{1,\dots, g\}, l \neq j \\ m \in \{1,\dots, d\}}} \right)
\right\vert^2
= r_k
\left\vert \det \left(\begin{pmatrix}
\frac{\del z_l}{\del w_m}
\end{pmatrix}_{\substack{l\in \{1,\dots, g\}, l \neq k \\ m \in \{1,\dots, d\}}} \right)
\right\vert^2
\end{equation*}
for the chart $\chi_0: B_1(0)^d \rightarrow \widetilde{X}$. As both determinants are holomorphic functions on $B_1(0)^d$, this implies
\begin{equation}
\label{equation::delz}
r_j^{1/2} \det \left( \begin{pmatrix}
\frac{\del z_l}{\del w_m}
\end{pmatrix}_{\substack{l\in \{1,\dots, g\}, l \neq j \\ m \in \{1,\dots, d\}}} \right) = u_{j,k} \cdot r_k^{1/2}  \det \left(\begin{pmatrix}
\frac{\del z_l}{\del w_m}
\end{pmatrix}_{\substack{l\in \{1,\dots, g\}, l \neq k \\ m \in \{1,\dots, d\}}} \right)
\end{equation}
for some $u_{j,k} \in S^1 = \{ z \in \IC \ | \ |z|=1\}$. Since the determinant \eqref{equation::non_vanishing_determinant} does not vanish, we can specialize \eqref{equation::delz} to $k=g$ and obtain
\begin{equation*}
\frac{\det \begin{pmatrix}
	\frac{\del z_l}{\del w_m}
	\end{pmatrix}_{\substack{l\in \{1,\dots, g\}, l \neq j \\ m \in \{1,\dots, d\}}}}
	{\det \begin{pmatrix}
	\frac{\del z_l}{\del w_m}
	\end{pmatrix}_{\substack{l\in \{1,\dots, g-1\} \\ m \in \{1,\dots, d\}}}} = \frac{u_{j,g} \cdot r_g^{1/2}}{r_j^{1/2}}
\end{equation*}
for each $j \in \{1,\dots, g-1 \}$. Setting $f_j= (-1)^{(g-l-1)}u_{j,g}r_g^{1/2}/r_j^{1/2} \in \IC^\times$ for $j \in \{1,\dots,g-1\}$, we can use Kramer's rule to obtain
\begin{equation*}
f_1
\begin{pmatrix}
\frac{\del z_1}{\del w_1} \\
\frac{\del z_1}{\del w_2} \\
\cdots \\
\frac{\del z_1}{\del w_d} 
\end{pmatrix}
+
f_2
\begin{pmatrix}
\frac{\del z_2}{\del w_1} \\
\frac{\del z_2}{\del w_2} \\
\cdots \\
\frac{\del z_2}{\del w_d} 
\end{pmatrix}
+
\cdots
+
f_{g-1}
\begin{pmatrix}
\frac{\del z_{g-1}}{\del w_1} \\
\frac{\del z_{g-1}}{\del w_2} \\
\cdots \\
\frac{\del z_{g-1}}{\del w_d} 
\end{pmatrix}
=
\begin{pmatrix}
\frac{\del z_{g}}{\del w_1} \\
\frac{\del z_{g}}{\del w_2} \\
\cdots \\
\frac{\del z_{g}}{\del w_d} 
\end{pmatrix}
\end{equation*}
on $B_1(0)^d$. Therefore, we obtain
\begin{equation*}
\frac{\del}{\del w_m} \left( f_1 z_1 + f_2 z_2 + \dots + f_{g-1} z_{g-1} + z_g \right) = 0, \ 1 \leq m \leq d,
\end{equation*}
for all $m \in \{1, \dots, d\}$. In conclusion, we obtain a non-trivial linear equation
\begin{equation}
\label{equation::linear_equation}
f_1 z_1 + f_2 z_2 + \dots + f_g z_g = b, \ b \in \IC,
\end{equation}
valid on all of $\widetilde{X}$ by real-analytic continuation.

\section{Completion of the proof of Theorem \ref{theorem:bogomolov}}
\label{section::last}

By Section \ref{subsection::diagonal}, the variety $S$ is the diagonal in $Y(\mathcal{N})^g$ and hence a special Shimura subvariety. Thus by \cite[Section 3.3]{Gao2018}, the bi-algebraic closure $X^{\mathrm{biZar}}$ of $X$ as defined in \cite{Gao2018} is the minimal horizontal torsion coset containing $X$. It is therefore our goal to prove that $\dim(X^{\mathrm{biZar}}) \leq g$ by means of the Ax-Schanuel conjecture for mixed Shimura varieties; in fact, this contradicts the assumption in (RBC). For this purpose, we set
\begin{equation*}
Y = \{ (\widetilde{x},x) \in \widetilde{X} \times X(\IC) \ | \ \mathcal{u}_{\mathrm{mixed}}(\widetilde{x})=x\} \subseteq (\mathcal{H}_1 \times \IC)^g \times \mathcal{E}(\mathcal{N})^g(\IC).
\end{equation*}
The mixed Ax-Schanuel conjecture in the form of \cite[Theorem 1.1]{Gao2018} yields
\begin{equation*}
\dim(X^{\mathrm{biZar}}) \leq \dim(Y^{\mathrm{Zar}}) - \dim(Y)
\end{equation*}
where $Y^{\mathrm{Zar}}$ is the Zariski closure of $Y$ in $(\IP^1(\IC) \times \IC)^g \times \mathcal{E}(\mathcal{N})^g(\IC)$. Writing $H \subset \IC^g$ for the linear hypersurface determined by \eqref{equation::linear_equation} and $\Delta(\mathbb{P}^1(\IC))$ for the diagonal in $\mathbb{P}^1(\IC)$, the analytic variety $Y$ is contained in the algebraic subset 
\begin{equation*}
(\Delta(\mathcal{H}_1) \times H) \times X \subset (\IP^1(\IC) \times \IC)^g \times \mathcal{E}(\mathcal{N})^g(\IC).
\end{equation*}
It follows that
\begin{align*}
\dim(Y^{\mathrm{Zar}}) \leq 1 + (g-1) + \dim(X) = \dim(X) + g.
\end{align*}
As $\dim(Y) = \dim(X)$, we conclude that
\begin{equation*}
\dim(X^{\mathrm{biZar}}) \leq \dim(Y^{Zar}) - \dim(Y) \leq g,
\end{equation*}
which concludes our proof of Theorem \ref{theorem:bogomolov}.

\textbf{Acknowledgements:} The author thanks Laura DeMarco, Ziyang Gao, Philipp Habegger, Myrto Mavraki, and Fabien Pazuki for their advice, discussion and encouragement. Finally, he thanks the anonymous referee for their attentive reading and their many suggestions that helped to improve the exposition substantially. He also thanks Jakob Stix for pointing out some inaccuracies in the article.

\bibliographystyle{plain}
\bibliography{references}

\begin{thebibliography}{10}

\bibitem{Andre2020}
Y.~Andr\'{e}, P.~Corvaja, and U.~Zannier.
\newblock The {B}etti map associated to a section of an abelian scheme.
\newblock {\em Invent. Math.}, 222(1):161--202, 2020.

\bibitem{Andre1992}
Yves André.
\newblock Mumford-{T}ate groups of mixed {H}odge structures and the theorem of
  the fixed part.
\newblock {\em Compositio Math.}, 82(1):1–24, 1992.

\bibitem{Birkenhake2004}
Christina Birkenhake and Herbert Lange.
\newblock {\em Complex abelian varieties}, volume 302 of {\em Grundlehren der
  Mathematischen Wissenschaften [Fundamental Principles of Mathematical
  Sciences]}.
\newblock Springer-Verlag, Berlin, second edition, 2004.

\bibitem{Bogomolov2018}
Fedor Bogomolov, Hang Fu, and Yuri Tschinkel.
\newblock Torsion of elliptic curves and unlikely intersections.
\newblock In {\em Geometry and physics. {V}ol. {I}}, pages 19--37. Oxford Univ.
  Press, Oxford, 2018.

\bibitem{Borel1991}
Armand Borel.
\newblock {\em Linear algebraic groups}, volume 126 of {\em Graduate Texts in
  Mathematics}.
\newblock Springer-Verlag, New York, second edition, 1991.

\bibitem{Deligne1974}
Pierre Deligne.
\newblock Th\'{e}orie de {H}odge. {III}.
\newblock {\em Inst. Hautes \'{E}tudes Sci. Publ. Math.}, (44):5--77, 1974.

\bibitem{DeMarco2020a}
Laura DeMarco, Holly Krieger, and Hexi Ye.
\newblock Uniform {M}anin-{M}umford for a family of genus 2 curves.
\newblock {\em Ann. of Math. (2)}, 191(3):949--1001, 2020.

\bibitem{DeMarco2022c}
Laura DeMarco and Niki~Myrto Mavraki.
\newblock Elliptic surfaces and intersections of adelic
  {$\mathbb{R}$}-divisors,.
\newblock to appear in J. Eur. Math. Soc.

\bibitem{DeMarco2020}
Laura DeMarco and Niki~Myrto Mavraki.
\newblock Variation of canonical height and equidistribution.
\newblock {\em Amer. J. Math.}, 142(2):443--473, 2020.

\bibitem{Diamond2005}
Fred Diamond and Jerry Shurman.
\newblock {\em A first course in modular forms}, volume 228 of {\em Graduate
  Texts in Mathematics}.
\newblock Springer-Verlag, New York, 2005.

\bibitem{Dimitrov2020a}
Vesselin {Dimitrov}, Ziyang {Gao}, and Philipp {Habegger}.
\newblock {A consequence of the relative Bogomolov conjecture}.
\newblock {\em to appear in J. of Number Theory}.

\bibitem{Dimitrov2021}
Vesselin Dimitrov, Ziyang Gao, and Philipp Habegger.
\newblock Uniform bound for the number of rational points on a pencil of
  curves.
\newblock {\em Int. Math. Res. Not. IMRN}, (2):1138--1159, 2021.

\bibitem{Dimitrov2021a}
Vesselin Dimitrov, Ziyang Gao, and Philipp Habegger.
\newblock Uniformity in {M}ordell-{L}ang for curves.
\newblock {\em Ann. of Math. (2)}, 194(1):237--298, 2021.

\bibitem{Gao2018a}
Ziyang Gao.
\newblock Generic rank of {B}etti map and unlikely intersections.
\newblock {\em Compos. Math.}, 156(12):2469--2509, 2020.

\bibitem{Gao2018}
Ziyang Gao.
\newblock Mixed {A}x-{S}chanuel for the universal abelian varieties and some
  applications.
\newblock {\em Compos. Math.}, 156(11):2263--2297, 2020.

\bibitem{Gauthier2021}
Thomas {Gauthier}.
\newblock {Good height functions on quasi-projective varieties:
  equidistribution and applications in dynamics}.
\newblock {\em arXiv e-prints}, page arXiv:2105.02479, May 2021.

\bibitem{Habegger2012a}
Philipp Habegger and Jonathan Pila.
\newblock Some unlikely intersections beyond {A}ndr\'{e}-{O}ort.
\newblock {\em Compos. Math.}, 148(1):1--27, 2012.

\bibitem{Kashiwara1986}
Masaki Kashiwara.
\newblock A study of variation of mixed {H}odge structure.
\newblock {\em Publ. Res. Inst. Math. Sci.}, 22(5):991--1024, 1986.

\bibitem{Katz1985}
Nicholas~M. Katz and Barry Mazur.
\newblock {\em Arithmetic moduli of elliptic curves}, volume 108 of {\em Annals
  of Mathematics Studies}.
\newblock Princeton University Press, Princeton, NJ, 1985.

\bibitem{Kuehnea}
Lars {K{\"u}hne}.
\newblock {Equidistribution in Families of Abelian Varieties and Uniformity}.
\newblock {\em arXiv e-prints}, page arXiv:2101.10272, January 2021.

\bibitem{Lewis1999}
James~D. Lewis.
\newblock {\em A survey of the {H}odge conjecture}, volume~10 of {\em CRM
  Monograph Series}.
\newblock American Mathematical Society, Providence, RI, second edition, 1999.
\newblock Appendix B by B. Brent Gordon.

\bibitem{Masser2012}
David Masser and Umberto Zannier.
\newblock Torsion points on families of squares of elliptic curves.
\newblock {\em Math. Ann.}, 352(2):453--484, 2012.

\bibitem{Masser2014}
David Masser and Umberto Zannier.
\newblock Torsion points on families of products of elliptic curves.
\newblock {\em Adv. Math.}, 259:116–133, 2014.

\bibitem{Masser2020}
David Masser and Umberto Zannier.
\newblock Torsion points, {P}ell's equation, and integration in elementary
  terms.
\newblock {\em Acta Math.}, 225(2):227--313, 2020.

\bibitem{Milne2013}
J.~S. Milne.
\newblock Shimura varieties and moduli.
\newblock In {\em Handbook of moduli. {V}ol. {II}}, volume~25 of {\em Adv.
  Lect. Math. (ALM)}, pages 467--548. Int. Press, Somerville, MA, 2013.

\bibitem{Olsson2016}
Martin Olsson.
\newblock {\em Algebraic spaces and stacks}, volume~62 of {\em American
  Mathematical Society Colloquium Publications}.
\newblock American Mathematical Society, Providence, RI, 2016.

\bibitem{Peters2008}
Chris A.~M. Peters and Joseph H.~M. Steenbrink.
\newblock {\em Mixed {H}odge structures}, volume~52 of {\em Ergebnisse der
  Mathematik und ihrer Grenzgebiete. 3. Folge. A Series of Modern Surveys in
  Mathematics [Results in Mathematics and Related Areas. 3rd Series. A Series
  of Modern Surveys in Mathematics]}.
\newblock Springer-Verlag, Berlin, 2008.

\bibitem{Pink1990}
Richard Pink.
\newblock {\em Arithmetical compactification of mixed {S}himura varieties}.
\newblock Bonner Mathematische Schristen [Bonn Mathematical Publications], 209.
  Universität Bonn Mathematisches Institut, Bonn, 1990.
\newblock Dissertation, Rheinische Friedrich-Wilhelms-Universität Bonn, Bonn,
  1989.

\bibitem{Pink2005a}
Richard Pink.
\newblock A common generalization of the conjectures of {A}ndré-{O}ort,
  {M}anin-{M}umford, and {M}ordell-{L}ang.
\newblock available from \url{http://www.math.ethz.ch/~pink}, 04 2005.

\bibitem{Raynaud1970}
Michel Raynaud.
\newblock {\em Faisceaux amples sur les sch\'{e}mas en groupes et les espaces
  homog\`enes}.
\newblock Lecture Notes in Mathematics, Vol. 119. Springer-Verlag, Berlin-New
  York, 1970.

\bibitem{Ullmo1998}
Emmanuel Ullmo.
\newblock Positivité et discrétion des points algébriques des courbes.
\newblock {\em Ann. of Math. (2)}, 147(1):167–179, 1998.

\bibitem{Wildeshaus1997}
J\"{o}rg Wildeshaus.
\newblock {\em Realizations of polylogarithms}, volume 1650 of {\em Lecture
  Notes in Mathematics}.
\newblock Springer-Verlag, Berlin, 1997.

\bibitem{Yuan2021}
Xinyi {Yuan}.
\newblock {Arithmetic bigness and a uniform Bogomolov-type result}, August
  2021.

\bibitem{Yuan2021b}
Xinyi {Yuan} and Shou-Wu {Zhang}.
\newblock {Adelic line bundles over quasi-projective varieties}.
\newblock {\em arXiv e-prints}, page arXiv:2105.13587, May 2021.

\bibitem{Zhang1998}
Shou-Wu Zhang.
\newblock Equidistribution of small points on abelian varieties.
\newblock {\em Ann. of Math. (2)}, 147(1):159–165, 1998.

\bibitem{Zhang1998a}
Shou-Wu Zhang.
\newblock Small points and {A}rakelov theory.
\newblock In {\em Proceedings of the {I}nternational {C}ongress of
  {M}athematicians, {V}ol. {II} ({B}erlin, 1998)}, number Extra Vol. II, page
  217–225, 1998.

\end{thebibliography}

\end{document}